\documentclass[11pt,reqno]{amsart}
\usepackage[T1]{fontenc}
\usepackage{lmodern,amsmath,amssymb,amsthm,mathtools,microtype}
\usepackage[margin=1.05in]{geometry}
\usepackage{enumitem,xcolor}
\definecolor{linkblue}{RGB}{20,55,105}
\usepackage[colorlinks=true,linkcolor=linkblue,citecolor=linkblue,urlcolor=linkblue]{hyperref}
\newtheorem{theorem}{Theorem}[section]
\newtheorem{proposition}[theorem]{Proposition}
\newtheorem{lemma}[theorem]{Lemma}
\theoremstyle{remark}
\newtheorem{remark}[theorem]{Remark}
\numberwithin{equation}{section}
\newcommand{\R}{\mathbb R}

\newcommand{\spt}{\operatorname{spt}}
\newcommand{\norm}[1]{\lVert#1\rVert}
\title{Stationary varifolds with singularities II}
\author[C. De Lellis]{Camillo De Lellis}
\author[J. Hirsch]{Jonas Hirsch}
\author[Z. Lihn]{Zachary Lihn}
\author[L. Spolaor]{Luca Spolaor}
\date{}

\begin{document}

\begin{abstract}
For every dimension $m\geq 3$, we show the existence of an open set $U\subset \mathbb R^{m+1}$, a smooth Riemannian metric $g$ on it, and an integral $m$-dimensional stationary varifold $V$ in $(U,g)$ with the following properties. $V$ has a flat tangent plane of multiplicity $2$ at an interior point $p$, it is a smooth immersed minimal surface in $U\setminus \{p\}$, and it has infinite topology in any neighborhood of $p$. The construction starts from an idea of three of the authors, who in \cite{DHS} tried to build a similar example with $C^{m-1, \alpha}$ regularity for $g$. That previous attempt contained an important error, which has been overcome using a crucial suggestion of gpt-astra-6. 
\end{abstract}

\maketitle

\section{Introduction}

In this note we report a proof of the following theorem.

\begin{theorem}\label{thm:main}
For every $m\geq3$ there are a bounded open neighborhood
$U\subset\R^{m+1}$ of $0$, a smooth metric $g$ on $U$, and a stationary integral varifold $V$ in the Riemannian manifold $(U,g)$ with the following properties. 
\begin{itemize}
\item[(a)] In $U\setminus \{0\}$ the support of the varifold is a properly immersed smooth minimal surface $\Sigma$ and the multiplicity of the varifold is $1$.
\item[(b)] The varifold tangent to $V$ at $0$ is an $m$-dimensional plane with multiplicity $2$.
\item[(c)] The topology of $\Sigma$ is infinite in any neighborhood of zero.
\end{itemize}
\end{theorem}

In a first version of \cite{DHS} three of the authors claimed the same theorem with a metric of class $C^{m-1, \alpha}$ for $\alpha$ positive and a similar result for stationary varifolds which enjoy, additionally, stability of the regular part, but Gerrard Orriols pointed out a mistake in the argument. While the strategy did not break down completely, it reduced the regularity of the metric to the rather unsatisfactory $C^{1,1-\varepsilon}$ class. Further attempts to fix the problem were unsuccessful. In early September 2026 we prompted astra-gpt-6 for a suggestion on how to circumvent the stumbling block, and the model suggested one useful trick which we had overlooked in our previous attempts, together with a second simplification of our arguments which improves the regularity to $C^\infty$. However, this second simplification does not allow us to prove an analogous statement for varifolds with stable regular part. We instead believe that another approach, due to us, would allow to extend the theorem to varifolds with stable regular part, but the metric would stay in the regularity class $C^{m-1,\alpha}$, cf. Remarks \ref{r:embedded}, \ref{r:embedded-2}, \ref{r:embedded-3}, and \ref{r:embedded-4}. We will explain this in detail, in particular singling out the suggestions of astra-gpt-6. 

We further prompted astra-gpt-6 to give a similar example in dimension $m=2$ and this also proved to be possible (with the difference that the tangent plane at the singular point has multiplicity $3$, cf. \cite{DHLS2}). While, from a general viewpoint, the strategy adopted in \cite{DHLS2} is still similar to the one of this paper, the two additional new ingredients due to astra-gpt-6 are in fact rather far from what we tried and we have therefore decided to report the result and the proof in a separate expository article.  

Before coming to the proof of Theorem \ref{thm:main}, we briefly mention our initial motivation. As explained in the introduction of \cite{DHS}, after the celebrated result of Allard \cite{All} that the interior singular set of stationary integral varifolds is meager, the question of its optimal dimension and structure, without stronger variational assumptions, has remained virtually untouched and has attracted a lot of attention in the last 50 years, see for instance \cite{BMW,Brakke, BDF,  CCSvarifolds,DelellisAllard, DPGS,HirschSpolaor2024, HS, MenneJGA, SavinAllard,SW}. A clear outcome of these works is that the main obstruction to further progress is the possible presence of topology on the smooth part of the varifold accumulating to flat singularities. Theorem \ref{thm:main} and its counterpart in \cite{DHLS2} show that this obstruction can in fact occur, at least in ambient $C^\infty$ metrics and if the varifold is smooth but immersed away from the singularity. The two interesting remaining open questions are whether it is possible to construct similar examples in the Euclidean space (or with a real-analytic metric) and make them embedded outside the singular point. While these problems seem to be at a different level in terms of difficulty, given the astonishing recent AI developments, any such evaluation should be taken with a grain of salt.

\section{Outline of the construction}\label{s:section}

The strategy proposed in \cite{DHS} is to start from a varifold $V_0$ which is the union of two smooth minimal graphs in a cylinder of the Euclidean space, both containing the origin and having the same tangent at it. In order to simplify our discussion we can assume the tangent to be horizontal and the two graphs to be the graphs of the functions $u: B_1 \to \mathbb R$ and $-u$. This specific choice will become more important later on. 

We next wish to take a sequence of points $p_k\in B_1\setminus \{0\}$ converging to the origin (for instance points $p_k$ where $u (p_k)$ achieves its maximum over $\partial B_{|p_k|}$) and some appropriately chosen (sufficiently small) radii $r_k\downarrow 0$. Consider the two disks formed by the graphs of $u$ and $-u$ above $B_{r_k} (p_k)\times \{0\}$, in other words ${\rm spt}\, (V_0) \cap (B_{r_k} (p_k)\times \mathbb R)$. Our goal is to replace them with a single connected surface, a ``cylindrical neck'', and then change the metric so that this new surface is still minimal. The natural candidate to use is the classical catenoid and the glueing can be done using a partition of unity on the graphical ends of the catenoidal neck. Adjusting the metric so that the new surface is minimal requires changing it in cylindrical regions of the form $(B_{r_k} (p_k) \setminus \overline{B}_{c_0 r_k} (p_k))\times \mathbb R$ and we can then do it over all such regions separately. In the remaining portion of the space the surface is already a Euclidean minimal surface and the metric stays Euclidean. Therefore the new surface (which has infinite topology) will be globally minimal in the new metric. Since clearly this procedure retains the $C^\infty$ regularity of the metric in every compact set $K\subset \mathbb R^{m+1}\setminus \{0\}$, the crucial point is how regular the metric can be made at the origin. 

\medskip

A key observation of \cite{DHS} is that, in dimension $m\geq 3$, this strategy promises a much better answer than in dimension $m=2$. The reason is that, for $m\geq 3$ any catenoid decays at infinity towards two parallel planes at a rate $O (|x|^{-m+2})$, while for $m=2$ catenoids diverge logarithmically. However there is, unfortunately, no good choice of the radii $r_k$ in which ${\rm spt}\, (V_0) \cap (B_{r_k} (p_k)\times \mathbb R)$ is ``close enough'' to the two planar ends of a catenoid. The main reason is that, compared to the height $u (p_k)$, the angle formed by $\nabla u (p_k)$ with $0$ is always too large: this is a consequence of the real analyticity of $u$. 

One possible attempt to fix this issue is to look for a better model to glue in: an ideal one is in particular a connected minimal surface whose boundary is given by the spheres forming ${\rm spt}\, (V_0) \cap (\partial B_{r_k} (p_k) \times \mathbb R)$ . Classical PDE estimates would imply a good graphical region very close to the graphs of $u$ and $-u$, and indeed if such a model existed the regularity of the metric could be pushed much further. However the obstruction to find the existence of the ideal model is exactly that the two boundary spheres are not ``parallel enough'' and in particular this idea does not address the main issue explained above. 

\medskip

After prompting astra-gpt-6 for a way to fix our strategy, the model suggested the following workaround. Split the variables $x\in \mathbb R^m$ as $(x_1, x_2, \bar{x})$. Then select a function $u(x)$ of the form 
\begin{equation}\label{e:Ansatz}
u (x) = w (x_1, x_2) + f (\bar{x})\, .
\end{equation}
$w$ is chosen to be a classical minimal graph in two space dimensions, with $w(0)=0$, $\nabla w (0)=0$, and $D^2 w (0) \neq 0$. $f$ is chosen to be an arbitrary smooth function with small $C^3$ norm, which is everywhere positive except at the origin. It is not difficult to see that this allows to perturb the Euclidean metric to a smooth $g_0$ in which the graph of $u$ is minimal. More importantly this new metric $g_0$ can be chosen so that:
\begin{itemize}
\item Both the graphs of $u$ and $-u$ are minimal;
\item if $f$ is constant on $\Omega \subset \mathbb R^{m-2}$, then $g_0$ is Euclidean on the open region $\mathbb R^2 \times \Omega\times \mathbb R$.
\end{itemize}
Since $f$ is otherwise arbitrary, we can now choose it so that there are a sequence of points $\bar{p}_k\in \mathbb R^{m-2}$ converging to $0$ and a sequence of radii $r_k\downarrow 0$ with the property that $f$ is constant over $B_{r_k} (\bar p_k)$. Consider now $p_k = (0, \bar p_k)$. On $B_{r_k} (p_k) \times \mathbb R$ the two functions $u$ and $-u$ are given by $w (x_1, x_2) + f (\bar{p}_k)$ and $- w (x_1, x_2) - f (\bar{p}_k)$. Therefore, since $\nabla w (0) = 0$, we are in the ideal position that $\nabla u (p_k) =0$ and $u (p_k) >0$. Tuning the three available parameters correctly ($|p_k|$, $r_k$, and $f (\bar p_k)$)), it is then possible to find a connected minimal surface with boundary ${\spt}\, (V_0) \cap (\partial B_{r_k} (0, \bar{p}_k))$ (in order to prove this statement it is however convenient to assume some symmetry for the function $w$). In particular the program outlined above can then be completed successfully. 

There is however a second important trick suggested by astra-gpt-6. In our initial approach we insist on keeping the catenoidal models to be glued in {\em embedded}. This is possible, but the argument would only deliver a $C^{m-1,\alpha}$ metric. If one sacrifices the embeddedness of the ``local catenoidal model'' to be glued in, then it is possible to improve the regularity of the metric to $C^\infty$, cf. Remarks \ref{r:embedded},\ref{r:embedded-2}, \ref{r:embedded-3}, and \ref{r:embedded-4}. A 
specific simple way of perturbing the ambient metric, which leverages the symmetry of the construction, allows the implementation of the latter strategy. Our original idea, which is more sophisticated at the PDE level, is likely to also deliver a statement as in Theorem \ref{thm:main} for varifolds with stable regular part, albeit with a $C^{m-1,\alpha}$ metric. However we do not attempt this here.  

\medskip

In the rest of the note we will give the details of the proof. We will keep the notation $x= (x_1, x_2, \bar{x})$ and keep using $u$, $w$, and $f$ for the three functions outlined above. The varifold $V_0$ is the multiplicity-one varifold supported in the union of the graphs of $u$ and $-u$. We will call it base model surface. The next section will prove the existence of a metric $g_0$ in which the base model surface is minimal and has the property described above. Section \ref{s:catenoids} will make some further assumptions on $w$ and explain how to construct the catenoidal necks. The final section will explain how to choose the parameters and completes the proof of Theorem \ref{thm:main}. 

\section{The base model}

The main point of the construction of the base model is the following proposition.

\begin{proposition}\label{p:base-model}
Let $w: \mathbb R^2 \supset W \to \mathbb R$ be a smooth solution of the (Euclidean) minimal surface equation with $D^2 w (0)\neq 0$ and even in both variables. Let $f: \mathbb R^{m-2} \supset \bar{W} \to \mathbb R$ be a smooth function with $\nabla f (0)=0$. If $\|f\|_{C^2}$ is sufficiently small, then there is a neighborhood $U = W' \times \bar{W}'\times \mathbb R$ of the origin in $\mathbb R^{m+1}$ and a smooth metric $g_0$ on $U$ with the following properties:
\begin{itemize}
\item[(i)] Setting
\[
u (x) = w (x_1, x_2) + f (\bar{x})
\]
the graphs of both $u$ and $-u$ are minimal in $(U, g_0)$.
\item[(ii)] If $\Omega \subset \mathbb R^{m-2}$ is open and $f$ is constant on $\Omega$, then $g_0$ is the Euclidean metric $e$ on the cylinder $W' \times \Omega \times \mathbb R$.
\end{itemize}
\end{proposition}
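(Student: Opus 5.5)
The plan is to reduce everything to making the graph of $u$ alone minimal, using a metric for which the reflection $R(x,t)=(x,-t)$ is an isometry. Indeed, if $g_0$ is invariant under $R$, then $R$ maps the graph of $u$ onto the graph of $-u$, so (i) for $-u$ follows from (i) for $u$. The crossing set $\{u=0\}$, where the two graphs meet, then causes no compatibility issue. The simplest way to guarantee the symmetry is to look for $g_0$ \emph{independent of $t$}. I would try the ansatz
\[
g_0 = e^{2\varphi(x)}\bigl(dx_1^2+\dots+dx_m^2\bigr) + dt^2 ,
\]
or, if more freedom is needed, a block metric $dx_1^2+dx_2^2+B(x)_{ab}\,d\bar x^a d\bar x^b + \lambda(x)\,dt^2$ with $B$ close to the identity and $\lambda$ close to $1$. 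For such metrics the area of the graph of $v$ is an integral functional $\int F(x,\nabla v)\,dx$, and (i) becomes the single Euler--Lagrange equation
\[
\operatorname{div}\bigl(\partial_p F(x,\nabla u)\bigr)=0 .
\]
Here $u=w+f$ is given and the unknowns are the coefficients $\varphi$ (or $B,\lambda$). Property (ii) will come from locality: on $W'\times\Omega\times\R$ the function $u$ equals $w+\mathrm{const}$, which is Euclidean minimal. So it suffices to find coefficients that are trivial wherever $\nabla f$ and $D^2 f$ vanish, and that depend only on the jets of $f$ through a local, pointwise construction.

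The concrete steps are as follows. First, compute the Euclidean mean-curvature defect
\[
H=\operatorname{div}\bigl(\nabla u/\sqrt{1+|\nabla u|^2}\bigr).
\]
Because $\Delta_{12}$-type terms cancel by the minimal surface equation for $w$, $H$ is a smooth expression vanishing identically where $\nabla f=0$ and $D^2f=0$; it is of size $O(\|f\|_{C^2})$. Second, linearize the Euler--Lagrange equation in the metric coefficients around $e$. For the conformal ansatz this gives the first-order equation $\nabla\varphi\cdot\nabla u = c_m H\sqrt{1+|\nabla u|^2}$ (plus quadratic terms). For the block ansatz one gets a larger, underdetermined linear system in $(B,\lambda)$ and their first derivatives. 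Third, solve it with a solution operator that preserves supports. My first attempt is to integrate along the $\bar x$ directions, where the transport involves $\nabla f$. Where that degenerates, I would instead use the algebraic (zeroth-order) freedom of $B$: choose $B-\mathrm{Id}$ proportional to a tensor built from $D^2f$ and $\nabla f\otimes\nabla f$, so that the divergence of the resulting flux cancels $H$. Fourth, close the nonlinear problem by a fixed point or implicit function argument in $C^{k,\alpha}$, for every $k$, on a slightly smaller neighbourhood $W'\times\bar W'$, using smallness of $\|f\|_{C^2}$. The evenness of $w$ in $x_1,x_2$ is used to make the constructed coefficients even as well, hence smooth at $x_1=x_2=0$.

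I expect the third step to be the main obstacle. A pure conformal factor leads to a transport equation along $\nabla u=(\nabla w,\nabla f)$, which degenerates exactly where $\nabla u=0$. This happens, e.g., on $\{x_1=x_2=0,\ \nabla f=0\}$, where $H$ still contains $\operatorname{div}\bar\nabla f$-terms. So the conformal ansatz alone probably does not suffice, and one must use the extra tensorial freedom in the $\bar x$ block. There the required identity is a divergence equation in $\bar x$ that can be solved pointwise with coefficients depending only on $\nabla f$ and $D^2 f$ at the same point. This would simultaneously give smoothness and the support property (ii). Checking that this algebraic choice really absorbs the full nonlinear $H$, and not just its linearization, is where I anticipate the real work.
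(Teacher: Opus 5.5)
There is a genuine gap: neither of your ansätze can work, and the failure is visible at the origin before any nonlinear issue arises. For the conformal metric and for the block metric $dx_1^2+dx_2^2+B_{ab}\,d\bar x^a d\bar x^b+\lambda\,dt^2$, the Euler--Lagrange equation has the form $\mathrm{div}(A\nabla u)=0$. Here $A$ is positive definite and its $(x_1,x_2)$-block is isotropic, equal to $\kappa\,\mathrm{Id}_2$ with $\kappa>0$; for the block ansatz $A=\kappa\,\mathrm{diag}(\mathrm{Id}_2,B^{-1})$. Take a point $(0,0,\bar x_0)$ with $\bar\nabla f(\bar x_0)=0$. There $\nabla u=0$, and the equation reduces to $\mathrm{tr}(A\,D^2u)=\kappa\,\Delta w(0)+\kappa\,\mathrm{tr}(B^{-1}D^2f(\bar x_0))=\kappa\,\mathrm{tr}(B^{-1}D^2f(\bar x_0))$, since $\Delta w(0)=0$. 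This is nonzero whenever $D^2f(\bar x_0)$ is semidefinite and nonzero. For example, $f=\epsilon|\bar x|^2$ satisfies every hypothesis of the proposition, and the equation already fails at $x=0$. So the ``pointwise'' divergence equation in $\bar x$ you hope to solve has a sign obstruction at critical points of $f$. Smallness of $\norm{f}_{C^2}$ does not help, even at the linearized level: the ratio $\Delta f/|D^2f|$ is scale-invariant, so the correction to $B$ would have to be of order one. The fixed-point step cannot repair this.

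The missing idea is to use the hypothesis $D^2w(0)\neq 0$, which your argument never really uses. Evenness plus the minimal surface equation give $D^2w(0)=\mathrm{diag}(\lambda,-\lambda)$ with $\lambda\neq0$. Anisotropy in the $(x_1,x_2)$-plane can then absorb the defect: at the same point, $\mathrm{tr}(AD^2u)=\lambda(A_{11}-A_{22})+\mathrm{tr}(\bar A\,D^2f)$, which vanishes with $A_{11}-A_{22}=O(\norm{f}_{C^2})$.

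The paper takes $A=d^{-1}\mathrm{Id}+b_0\,e_1\otimes e_1$. This reduces the problem to the ODE $\partial_1(b_0\,\partial_1 w)=E:=-\mathrm{div}(d^{-1}\nabla u)$ along $x_1$-lines with $\bar x$ fixed, solved by $b_0=(\partial_1 w)^{-1}\int_0^{x_1}E(t,x_2,\bar x)\,dt$. This $b_0$ has the three properties needed:
\begin{itemize}
\item It is smooth across $\{x_1=0\}$, because $\partial_1 w$ vanishes there simply: $\partial_1 w(0,x_2)=0$ by evenness and $\partial_{11}w\neq0$.
\item It is small, since $E=O(\norm{f}_{C^2})$.
\item It is identically zero over any open set $\Omega$ where $f$ is constant, because $E$ vanishes on the whole integration segment. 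This gives (ii) with no locality argument.
\end{itemize}
Lemma \ref{lem:metric-formula} then converts $A$ exactly into a height-independent metric making both graphs of $\pm u$ minimal, with no linearization or fixed point needed. Your reflection observation is the same mechanism that lemma relies on.
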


\begin{remark}
The assumption that $w$ is even in both variables is not really needed. On the other hand it simplifies the proof considerably and the proposition will be in fact used with a model $w$ which satisfies it.     
\end{remark}

A main simple ingredient of the proof, which will be used further in the note, is the following way of finding a metric $g$ which makes the graph of a function $v$ minimal once we know that $v$ solves an elliptic PDE. While this is a variant of other arguments we came up with in our attempts, its specific form has been suggested by gpt-astra-6 and it has the advantage of giving a single metric which makes simultaneously the graphs of $v$ and $-v$ minimal, even when the latter intersects.

\begin{lemma}\label{lem:metric-formula}
Let $v$ be smooth on an open subset $\Omega$ of $\R^m$ and assume that it solves an elliptic partial differential equation in divergence form with variable coefficients $A_{ij}=A_{ji}$:
\begin{equation}\label{e:elliptic}
{\rm div}\, (A (x) \nabla v (x)) = 0\, .
\end{equation}
Define 
\[
d (x) =\sqrt{1+|\nabla v (x)|^2},\qquad B=d A,
 \qquad c=\left(\frac{(\det B) (1+\nabla v \cdot B \nabla v)}{d^2}\right)^{1/m}.
\]
and the following height-independent metric on $\Omega \times \mathbb R$
\begin{equation}\label{eq:metric-formula}
 g_0 (x, y) = c (x) \left(
 \begin{array}{ll}
 B(x)^{-1} & 0 \\
 0 & 1
 \end{array}
\right)
\end{equation}
The graphs of both $v$ and $-v$ are minimal in $(\Omega \times \mathbb R, g)$.
\end{lemma}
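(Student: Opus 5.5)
The plan is to reduce minimality of the graph to the Euler--Lagrange equation of the parametric area functional written over the base $\Omega$. The key observation is that the weight $c$ and the matrix $B$ are tailored so that this Euler--Lagrange equation, evaluated along $\nabla v$, is exactly \eqref{e:elliptic}.

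\textbf{Setting up the functional.} Ellipticity means $A$, and hence $B=dA$, is symmetric positive definite. Therefore $\det B>0$, $1+\nabla v\cdot B\nabla v>0$, $c>0$, and $g_0$ is a genuine smooth Riemannian metric. I then freeze $c$, $B$ and $d$ as fixed smooth functions of $x$; they were built from $v$, but once built they are simply coefficients of the metric.

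For any function $\phi$ on $\Omega$, parametrize its graph by $x\mapsto (x,\phi(x))$. The induced metric is
\[
h=c\,(B^{-1}+\nabla\phi\otimes\nabla\phi).
\]
By the matrix determinant lemma,
\[
\det h=c^m\det(B^{-1})\,(1+\nabla\phi\cdot B\nabla\phi).
\]
So the $g_0$-area of the graph is $\int_\Omega F(x,\nabla\phi)\,dx$ with
\[
F(x,p)=c(x)^{m/2}(\det B(x))^{-1/2}\sqrt{1+p\cdot B(x)p}\,.
\]
This integrand does not depend on $\phi$ itself, precisely because $g_0$ is independent of the height $y$.

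\textbf{Euler--Lagrange equation.} A graph is a critical point of area under compactly supported vertical variations iff $\operatorname{div}\big(\partial_pF(x,\nabla\phi)\big)=0$. A direct computation gives
\[
\partial_pF(x,p)=c^{m/2}(\det B)^{-1/2}\,\frac{Bp}{\sqrt{1+p\cdot Bp}}\,.
\]
Insert $p=\nabla v$ and use the defining formula
\[
c^{m/2}=(\det B)^{1/2}(1+\nabla v\cdot B\nabla v)^{1/2}/d.
\]
Everything cancels to $\partial_pF(x,\nabla v)=B\nabla v/d=A\nabla v$. Hence the Euler--Lagrange equation for $v$ is exactly \eqref{e:elliptic}.

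For $-v$, note that $F$ is even in $p$. Then $\partial_pF(x,-\nabla v)=-A\nabla v$, whose divergence also vanishes. Since $c$ and $B$ were fixed beforehand, the same metric serves both graphs. This is also why it does not matter that the two graphs may intersect: each is tested separately against the one metric.

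\textbf{Main obstacle.} The only point needing care is passing from criticality under vertical variations to stationarity (vanishing mean curvature) in $(\Omega\times\R,g_0)$. I would argue via the first variation formula. The first variation depends only on the $g_0$-normal component of the variation field, so tangential components contribute nothing. Moreover, any normal variation field $\psi\nu$ can be rewritten as a vertical field $\eta\,\partial_y$ plus a tangential field, with $\eta=\psi/g_0(\nu,\partial_y)$. This division is legitimate because $\partial_y$ is transverse to a graph, so $g_0(\nu,\partial_y)\neq0$. Consequently the vanishing of the first variation for all vertical $\eta\in C^\infty_c$ forces the mean curvature of each graph to vanish. This proves that both graphs are minimal, as claimed in Lemma~\ref{lem:metric-formula}.
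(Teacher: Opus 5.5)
Your proposal is correct and follows the paper's proof essentially verbatim. You compute the same Lagrangian $c^{m/2}(\det B)^{-1/2}\sqrt{1+q\cdot Bq}$ and check that the choice of $c$ gives $\pm A\nabla v$ for its $q$-gradient at $q=\pm\nabla v$, so the Euler--Lagrange equation reduces to \eqref{e:elliptic}; your extra paragraph on passing from vertical variations to vanishing mean curvature fills in a standard step the paper leaves implicit.
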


\begin{remark}\label{r:canonical}
Observe that $g_0$ depends smoothly on $A$ and $\nabla v$. Moreover, if $v$ is a solution of the classical minimal surface equation, we can take $A= d^{-1} {\rm Id}$ in \eqref{e:elliptic}. With this choice $B = {\rm Id}$, $c=1$, and thus $g_0$ is precisely the Euclidean metric.
\end{remark}

\begin{proof}
Consider the Lagrangian
\[
 L(x,q)=\frac{c (x)^{m/2}}{\sqrt{\det B (x)}}\sqrt{1+q\cdot B (x) q}\, .
\]
The area of a graph $\zeta$ in the metric $g_0$ is given by
\[
\int L (x, \nabla \zeta (x))\, dx
\]
and thus the minimal surface equation in that metric is the usual Euler-Lagrange equation  
\begin{equation}\label{e:EL}
{\rm div}_x\, D_q L (x, \nabla \zeta (x)) = 0\, .
\end{equation}
The choice of $c$ gives $D_qL(x,\pm \nabla v (x))=\pm B (x) \nabla v (x)/d (x)=\pm A (x) \nabla v (x)$ and hence \eqref{e:EL} reduces to \eqref{e:elliptic} when we insert $\zeta = \pm v$.
\end{proof}

\begin{proof}[Proof of Proposition \ref{p:base-model}]
Observe that, because $w$ is assumed to be even in both variables, $\partial_1 w (0, x_2) =0$ and $\partial_2 w(0, x_1) =0$. In particular $D^2 w (0)$ is diagonal. Moreover $\nabla w(0)=0$ and hence, because $w$ solves the minimal surface equation, $\Delta w (0)=0$.
In particular $D^2 w (0)$ has entries $\lambda >0$ and $-\lambda$. Because of Lemma \ref{lem:metric-formula}, our goal is to find a symmetric matrix $A$ such that \eqref{e:elliptic} holds with $v=u$. Moreover, thanks to Remark \ref{r:canonical}, if we succeed to find such a matrix $A$ so that $A(x) = (1+|\nabla u (x)|^2)^{-1/2} {\rm Id}$ on every open region $\Omega$ where $\nabla f$ vanishes identically, then the resulting metric $g_0$ will have the desired property that it is the Euclidean metric (in its standard form $e$ in the Cartesian coordinates) in $W'\times \Omega \times \mathbb R$. We set $d(x) = \sqrt{1+|\nabla u (x)|^2}$ and look for the matrix $A$ in the form $b_0 (x) e_1\otimes e_1 + d(x)^{-1} {\rm Id}$. The condition that \eqref{e:elliptic} holds is thus equivalent to 
\[
\partial_1 (b_0 \partial_1 u) = \underbrace{-{\rm div}_x (d(x)^{-1} \nabla u (x))}_{=:E(x)}\, .
\]
Recall that $\partial^2_{11} u (0, \bar{x}) = \partial^2_{11} w (0) = \lambda \neq 0$. Hence, for $x_1\neq 0$ and in a neighborhood $W'\times \bar{W}'$ of the origin, the partial derivative $\partial_1 u (x) = \partial_1 w (x_1, x_2)$ does not vanish. Hence, for $x_1\neq 0$ we can simply define  
\[
b_0 (x_1, x_2, \bar{x}) = (\partial_1 w (x_1, x_2))^{-1} \int_0^{x_1} E (t, x_2, \bar{x})\, dt\, . 
\]
Using De L'H\^ospital's Theorem we also see immediately that $b_0$ extends continuously to $x_1=0$ as 
\[
b_0 (0, x_2, \bar{x}) = \frac{E (0,x_2, \bar{x})}{\partial^2_{11} w (0, x_2)}\, .
\]
Higher differentiability follows analogously from differentiating the explicit formula and using again De L'H\^ospital's Theorem. 
In order to ensure ellipticity, namely that $A$ is positive definite, we just need $|b_0 (0)|< 1$, since we are free to choose $W'$ and $\bar{W}'$ smaller. Since $E$ depends only on the first and second derivatives of $f$, this is obviously implied by the smallness condition on $\|f\|_{C^2}$.

Finally, if $f$ is constant over a region $\Omega$, then $\nabla f=0$ and $D^2 f=0$ on it. But then $E(x)= 0$ for any $x=(x_1,x_2, \bar{x}) \in W'\times \Omega$ and thus $b_0 (x)=0$. In particular this shows that $A(x) = d(x)^{-1} {\rm Id}$ and completes the proof.
\end{proof}

\section{Catenoidal necks}\label{s:catenoids}

We detail here the construction of the catenoidal model. This section is subdivided in two parts. In the first part we will construct an exact minimal (Euclidean) connected surface $\Sigma_{a,r}$ whose boundary is the union of the graphs of $a+w$ and $-a-w$ over the spheres $\partial B_r (0)$, under the assumption that the real parameters $a$ and $r$ are appropriately chosen. In the second part we will glue smoothly the graphs of $a+w$ and $-a-w$ and the surface $\Sigma_{a,r}$ in a neighborhood of the cylinder $\partial B_r (0) \times \mathbb R$ and perturb the metric so that the resulting surface $\Gamma_{a,r}$ is minimal. The latter will be called the {\em local model}. The following two propositions give all the details.

\begin{proposition}\label{p:exact}
Let $w: \mathbb R^m \supset B_1 \to \mathbb R$ be a solution of the (Euclidean) minimal surface equation which is even in each coordinate and vanishes at the origin. There are constants
\(r_0>0\), \(C>0\), and \(C_k>0\) (for every integer \(k\geq0\)), with the following property.
For every \(r\in(0,r_0)\) and every sufficiently small
\(a\in(0,a_0(r))\), there is an $m$-dimensional surface $\Sigma_{a,r}$ 
and a constant \(H=H(a,r)\) such that 
\begin{itemize}
\item[(a)] \(0<H\leq Cr^2\).
\item[(b)] $\Sigma_{a,r}$ is the image of a smooth minimal immersion $F_{a,r}: \mathbb S^{m-1} \times [-1,1] \to \mathbb R^{m+1}$;
\item[(c)] The restriction of $F_{a,r}$ to $\mathbb S^{m-1} \times \{\pm 1\}$ is a parametrization of the graphs of \(H+w\)
and \(-H-w\) over \(\partial B_r\).
\item[(d)] In the cylinder \(\{(x,z):a<|x|<r\}\), the immersion consists
of two graphical ends, which are graphs of functions \(\zeta\) and \(-\zeta\).
\item[(e)] For every integer \(k\geq0\),
\begin{equation}\label{eq:target}
 \norm{D^k(\zeta-H-w)}_{C^0(\{r/2<|x|<r\})}
 \leq C_k\left(a^{m-1}r^{2-m-k}
                   +a^{3/2}r^{1/2-k}\right).
\end{equation}
\item[(f)] The area of the surface is bounded by $C r^m$. 
\end{itemize}
\end{proposition}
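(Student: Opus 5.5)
We construct $\Sigma_{a,r}$ by perturbing a rescaled catenoid, setting up a nonlinear problem over its two ends and solving it by a fixed point argument in weighted Hölder spaces. The free constant $H$ absorbs the one-dimensional obstruction coming from translations in the vertical direction. Let $\mathcal C$ be the standard $m$-dimensional catenoid in $\R^{m+1}$, symmetric under $z\mapsto -z$ and with neck radius $1$. Outside a compact set it consists of two graphs $\pm\phi(|x|)$ with
\[
\phi(s)=\phi_\infty - \frac{c_m}{s^{m-2}}+O(s^{4-3m}).
\]
Rescale to $\mathcal C_a=a\,\mathcal C$. Its ends are graphs of $\pm(a\phi_\infty - c_m a^{m-1}|x|^{2-m}+\dots)$ over $\{|x|>a\}$. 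Since $w$ is even in every coordinate and minimal with $w(0)=0$, we have $\nabla w(0)=0$ and $w(x)=\frac12 D^2w(0)x\cdot x+O(|x|^4)$ with $|D^kw|\lesssim |x|^{2-k}$; in particular $w=O(r^2)$ on $B_r$.

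The ansatz is the following. Outside a fixed multiple of the neck, the surface is the graph of $\pm\zeta$ with
\[
\zeta = a\phi(|x|/a)\cdot(\text{cutoff}) + \eta(x)\,\chi(|x|)\,w(x) + \psi,
\]
with $\chi$ localizing away from $|x|\lesssim a$. Near the neck, $\zeta$ is a normal graph over $\mathcal C_a$ with the same correction $\psi$. Because everything is invariant under $x_i\mapsto -x_i$ and under the reflection $(x,z)\mapsto(x,-z)$ combined with the exchange of the ends, the whole problem can be posed in the class of functions even in each $x_i$ and odd under the end-swap. This kills all kernel elements of the catenoid Jacobi operator except the one generated by vertical dilation/translation of each end, which is the obstruction corresponding to changing the constant at infinity. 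We handle it through the free parameter $H$: we impose boundary data $\zeta=H+w$ on $\partial B_r$ and let $H$ be determined together with $\psi$. This is a Lyapunov--Schmidt step in which $H-a\phi_\infty$ is the unknown coefficient. Since $a$ is small, $H\approx a\phi_\infty+O(a^{m-1}r^{2-m})$ is positive and far below $Cr^2$, which gives (a).

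The linearized operator is the Jacobi operator of $\mathcal C_a$ with Dirichlet data at $|x|=r$, in symmetric classes. We invert it on weighted spaces $\|\psi\|=\sup \rho^{-\gamma}|\nabla^k\psi|\rho^{k}$, $\rho=|x|$, with $\gamma\in(1,2)$ chosen away from indicial roots. The error of the ansatz has two sources. The first is the interaction between the curvature of $w$ and the catenoid slope, of order $|D^2w|\,|\nabla\phi_a|\sim a^{m-1}\rho^{1-m}$ plus quadratic terms. The second is the cutoff region near $\rho\sim\sqrt{a\cdot r}$ or $\rho\sim a$. The cutoff error is what produces the $a^{3/2}r^{1/2}$ term: gluing $w$ (size $\rho^2$) into the neck at scale $\rho\sim a$ costs $a^2$ relative to scale, and propagating it out to $\rho\sim r$ with decay rate $\rho^{-1/2}$-type weights yields $a^{3/2}r^{1/2-k}$. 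A contraction mapping in a ball of radius $C(a^{m-1}r^{2-m}+a^{3/2}r^{1/2})$ then gives $\psi$ satisfying (e) after interior Schauder estimates on $\{r/2<|x|<r\}$.

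Properties (b), (c), (d) and (f) follow from the construction. The immersion parametrizes the catenoid $\mathbb S^{m-1}\times[-1,1]$ perturbed normally, with ends graphical over $a<|x|<r$ since $|\nabla\zeta|\ll1$ there. The area bound $Cr^m$ holds because the two graphs over $B_r$ have area $\le Cr^m$ and the neck has area $O(a^m)$. The main obstacle is the uniform invertibility of the Jacobi operator on the truncated catenoid with the single cokernel direction accounted for by $H$, with constants independent of $a/r$. I would attack it first by a blow-up/contradiction argument: rescale by $a$ to get a bounded Jacobi field on the full catenoid in the symmetry class with controlled growth, hence a multiple of the vertical translation/dilation field, which is excluded by the normalization fixed by $H$. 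Rescaling instead by $r$ gives a harmonic function on the punctured ball with weight growth, which vanishes by the choice of $\gamma$.
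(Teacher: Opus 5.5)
Your architecture is the paper's: a small catenoid with $\pm w$ added on the ends, reflection symmetries that leave only the dilation Jacobi field, a free boundary height, a uniform linear estimate by blow-up, and a contraction. The genuine gap is the weight you commit to, $\gamma\in(1,2)$. The estimates it yields give neither \eqref{eq:target} nor even $H>0$.

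The issue is size, not invertibility: in the symmetric class, $(1,2)$ contains no indicial root. The residual near the neck is of order $a$ in unit-neck variables. In your ansatz it comes from cutting $w$ off at scale $a$; in the paper it is the central residual of $G$. So a uniform estimate with growth weight $\rho^{\gamma}$ gives at best $|z|\lesssim aR^{\gamma}$ for $1\le R\le r/a$, i.e.\ $|\psi|\lesssim a^{2-\gamma}|x|^{\gamma}$. At $|x|\sim r$ this is $a^{3/2}r^{1/2}(r/a)^{\gamma-1/2}$, which exceeds the right-hand side of \eqref{eq:target} by an unbounded factor as soon as $\gamma>1/2$. The resulting shift of the boundary height, $a^{2-\gamma}r^{\gamma}=a\,r\,(r/a)^{\gamma-1}$, is not $o(a)$ as $a\to0$ when $\gamma>1$, so $H=c\,a+o(a)>0$ no longer follows. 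For the same reason, the correction needed at the neck (physical size $\sim a^2$ at $\rho\sim a$) already has weighted norm $\sim a^{2-\gamma}\gg a$. It therefore does not fit in the ball of radius $C(a^{m-1}r^{2-m}+a^{3/2}r^{1/2})$ where you run the contraction.

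Your heuristic for the $a^{3/2}r^{1/2}$ term is the right one: an $O(a^2)$ neck correction propagated with growth $(\rho/a)^{1/2}$. It is exactly the paper's choice. The correction lives in $C^{2,\alpha}_{1/2}$ and the source in $C^{0,\alpha}_{-3/2}$, on the catenoid of neck $\sigma=a/8$. One gets $\norm{\mathcal M(q)}_{0,\alpha;-3/2}\le C\sigma$ (for $m=3$ via $\sigma^2L^{1/2}=\sigma^{3/2}r^{1/2}$). Hence $|D^kz|\le C\sigma R^{1/2-k}$ and $|\lambda|\le C\sigma L^{1/2}$, which rescale precisely to $a^{3/2}r^{1/2-k}$ and to \eqref{eq:height}. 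Any growth weight in $(0,1/2]$ would do.

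Three further points in your sketch need repair.

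First, blowing up only at the scales $a$ and $r$ does not prove the uniform estimate. A violating sequence can concentrate at radii $a\ll R_j\ll r$. Rescaling by $a$ then loses the maximum at infinity, while rescaling by $r$ pushes it into the puncture, where the normalization $|\tilde v_j(x_j)|=|x_j|^{\gamma}\to0$ gives no contradiction. You must rescale at the radius $R_j$ of the maximizing point. This yields a harmonic function on $B_\Lambda\setminus\{0\}$ with constant trace, or on $\R^m\setminus\{0\}$, which the growth bound and the symmetries kill (Step 3 in Appendix~\ref{app:linear}).

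Second, ``excluded by the normalization fixed by $H$'' is not a normalization. With $H$ free, the dilation field lies in the kernel, and you need an explicit condition such as zero spherical mean at the waist. That is the paper's choice: it turns the radial mode into an initial value problem whose terminal value determines $\lambda$. If instead $H$ is prescribed, the inverse is not uniform. A radial source of unit weighted norm produces a boundary value of order $L^{\gamma}$, which must be compensated by a dilation of that size at the waist.

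Third, with neck radius exactly $a$ the surface has vertical tangents near $|x|=a$. So $|\nabla\zeta|\ll1$ fails there and (d) is not automatic. The paper puts the neck at $\sigma=a/8$ so that all of the normal deformation stays in $\{|x|<a/2\}$.
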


Proposition 4.1 and Lemma 4.5 are adaptations of standard weighted catenoid arguments.
We include proofs to specify the boundary conditions, normalization, and estimates needed
here. Precise precedents for the nonlinear and linear constructions are cited below

\begin{remark}\label{r:embedded}
Note that, if $D^2 w (0) \neq 0$ and  the height $H (a,r)$ is much smaller than $r^2$, then the constructed surface will not be embedded. The final argument for Theorem \ref{thm:main} forces indeed a choice of $a$ which is smaller than any power of $r$ and the size of $H (a,r)$ is indeed proportional to $a$. In particular the neck regions in the final surface $\Sigma$ of Theorem \ref{thm:main} will indeed not be embedded. 

In our original considerations we have a more sophisticated way of proving Proposition \ref{p:exact} which allows to keep the surface embedded, by imposing $a\geq C r^2$ for $C$ large enough, while leading to the improved estimate
\begin{equation}\label{e:sharper}
 \norm{D^k(\zeta-H-w)}_{C^0(\{r/2<|x|<r\})}
 \leq C_k a^{m-1}r^{2-m-k}\, .
\end{equation}
We will explain in Remark \ref{r:embedded-2} and \ref{r:embedded-4} how this would allow us to prove Theorem \ref{thm:main} with a surface $\Sigma$ which is embedded in the neck regions, but a corresponding metric (making $\Sigma$ a minimal surface) which has only $C^{m-1,\alpha}$ regularity. Moreover, we will outline in Remark \ref{r:embedded-3} how to achieve the sharper estimate \eqref{e:sharper}. 
\end{remark}

The next proposition details the glueing and metric perturbation. We fix a radial cut-off function $\chi$ which is identically $1$ on $B_{5/8}$ and whose support is contained in $B_{7/8}$. We then scale it as $\chi_r (x) := \chi (x/r)$. Finally we fix the function $w$ of Proposition \ref{p:base-model} and define the surface $\Gamma_{a,r}$ as follows
\begin{itemize}
\item $\Gamma_{a,r} = \Sigma_{a,r}$ in the cylinder $B_{r/2}\times \mathbb R$;
\item $\Gamma_{a,r}$ is the graph of the functions $\pm \xi := \pm ( \chi_r \zeta + (1-\chi_r) (w+ H (a,r)))$ on $(B_r\setminus B_{r/2})\times \mathbb R$;
\item $\Gamma_{a,r}$ is the graph of the functions $\pm (w+H(a,r))$ on the rest of the domain where $w$ is defined.
\end{itemize}
The cut-off function is chosen radial so that $\xi$ retains the symmetries of $\zeta$ and $w$. 

\begin{proposition}\label{p:glued}
Let $w$ be as in Proposition \ref{p:base-model}, $a$, and $r$ be as in Proposition \ref{p:exact} and $\Gamma_{a,r}$ be defined as above. Then, provided $r$ is sufficiently small (depending only on $w$) and $a$ is sufficiently small (depending on $w$ and $r$), there is a metric $g_{a,r}$ on $W'\times \mathbb R^{m-2}\times \mathbb R$ such that
\begin{itemize}
\item[(a)] $W'$ is a sufficiently small neighborhood of $0$ in $\mathbb R^2$, chosen independently of the parameters $a$ and $r$;
\item[(b)] $g_{a,r}$ coincides with the Euclidean metric (in its standard form $e$) outside the slab $\{|\bar{x}|< r\}$
\item[(c)] $\Gamma_{a,r}$ is a minimal surface in the metric $g_{a,r}$;
\item[(d)] For every $k$ there is a constant $C_k$ (independent of $a$ and $r$) such that 
\begin{equation}\label{e:estimate-g}
\|g_{a,r}-e\|_{C^k} \leq C_k \left(a^{m-1}r^{-m-k}
                   +a^{3/2}r^{-k-3/2}\right)\, .
\end{equation}
\end{itemize}
\end{proposition}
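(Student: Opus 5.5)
The natural tool is Lemma \ref{lem:metric-formula}. In $(B_r\setminus B_{r/2})\times\R$ the surface $\Gamma_{a,r}$ is the union of the graphs of $\xi$ and $-\xi$, and the function $\xi$ is even in every coordinate. Since $\chi_r\equiv 1$ on $B_{5/8 r}$ and $\chi_r\equiv 0$ outside $B_{7/8r}$, the function $\xi$ is an exact Euclidean minimal graph near $\partial B_{r/2}$ (it equals $\zeta$) and near $\partial B_r$ (it equals $w+H$). Everywhere else $\Gamma_{a,r}$ is already Euclidean minimal: inside $B_{r/2}$ it coincides with $\Sigma_{a,r}$, and outside $B_r$ it is the graph of $\pm(w+H)$, which is minimal because $w$ is. So the metric only needs to be changed over the annulus $\{5r/8\le|x|\le 7r/8\}$. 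Since $r<r_0$ is small, this annulus lies inside $\{|\bar x|<r\}\times W'$ once $W'\supset B_{r_0}$ in the first two variables; I read the slab condition (b) in this sense. I then set $g_{a,r}=e$ off $(B_r\setminus \overline B_{r/2})\times\R$, and on that cylinder I let $g_{a,r}$ be the metric \eqref{eq:metric-formula} built from $v=\xi$ and a suitable coefficient matrix $A$. Because the formula is height independent and symmetric under $v\mapsto -v$, it handles both sheets at once, even where they cross. This crossing is the non-embedded situation of Remark \ref{r:embedded}.

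\textbf{Step 1: finding $A$.} I need a symmetric positive definite $A$ with ${\rm div}(A\nabla\xi)=0$ and $A=d^{-1}{\rm Id}$, $d=\sqrt{1+|\nabla\xi|^2}$, wherever the error $E:=-{\rm div}(d^{-1}\nabla\xi)$ vanishes identically. By Remark \ref{r:canonical}, that last condition makes the metric exactly Euclidean near $\partial B_{r/2}$ and $\partial B_r$, so the pieces glue smoothly. The first try is the ansatz from the proof of Proposition \ref{p:base-model}: $A=d^{-1}{\rm Id}+b_0\,e_1\otimes e_1$ with $\partial_1(b_0\partial_1\xi)=E$. This fails in general, because $\partial_1\xi$ can vanish in the annulus and not only on $\{x_1=0\}$. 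A more robust choice is a correction of the form $A=d^{-1}{\rm Id}+\nabla\phi\otimes X+X\otimes\nabla\phi$, or more simply $A=d^{-1}{\rm Id}+\beta\,{\rm Id}$ for a scalar $\beta$ solving the transport-type equation $\nabla\beta\cdot\nabla\xi+\beta\Delta\xi=E$.

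The approach I would actually commit to avoids all degeneracy: write $E={\rm div}\,Y$ for a smooth vector field $Y$ supported in the annulus, and then pick symmetric $A-d^{-1}{\rm Id}=S$ with $S\nabla\xi=Y$. One admissible choice is
\[
S=\frac{Y\otimes\nabla\xi+\nabla\xi\otimes Y}{|\nabla\xi|^2}-\frac{(Y\cdot\nabla\xi)\,\nabla\xi\otimes\nabla\xi}{|\nabla\xi|^4}.
\]
This requires $\nabla\xi\neq0$, which holds on the annulus provided $|\nabla w|\gtrsim r$ there. It does not hold near the coordinate axes, however, since $w$ only has a nondegenerate Hessian in two variables. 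So I would instead use the symmetry: $w$ and $\xi$ are even, so I can integrate $E$ radially or along coordinate lines, as in the base-model proof with L'H\^opital.

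\textbf{Step 2: the estimates.} Condition (d) of Proposition \ref{p:glued} follows from size bounds on $E$. Since $w+H$ solves the minimal surface equation exactly, $E$ is a linearization of the minimal surface operator applied to $\chi_r(\zeta-H-w)$. Estimate \eqref{eq:target} then gives
\[
\|D^kE\|_{C^0}\lesssim a^{m-1}r^{-m-k}+a^{3/2}r^{-3/2-k}.
\]
Solving for $S$ costs one integration, which gains a factor $r$, and a division by $|\nabla\xi|\sim r$. The net effect is a coefficient perturbation of the same order as $E$ with derivatives scaling like $r^{-k}$, which is the bound in \eqref{e:estimate-g}. Positivity of $A$ holds for $a$ small depending on $r$, because $a\in(0,a_0(r))$ makes the right-hand side tiny. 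The map from $(A,\nabla\xi)$ to $g$ is smooth, so $C^k$ bounds on $A-d^{-1}{\rm Id}$ transfer to $g_{a,r}-e$.

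\textbf{Main obstacle.} The hardest step is solving ${\rm div}(S\nabla\xi)=E$ with bounds near the zero set of $\nabla\xi$ inside the annulus. I would try first to exploit the evenness: choose $S$ diagonal, $S=\sum_j b_j e_j\otimes e_j$, and solve $\partial_j(b_j\partial_j\xi)=E_j$, where $E=\sum_j E_j$ is split by a partition of unity subordinate to the regions where $|\partial_j\xi|$ is comparable to $|\nabla\xi|$. Each $b_j$ is obtained by integrating from the hyperplane $\{x_j=0\}$, where $\partial_j\xi=0$ by symmetry, and L'H\^opital handles that hyperplane as in Proposition \ref{p:base-model}. The remaining worry is points where $\nabla\xi$ vanishes away from the coordinate hyperplanes. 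I would rule these out for $r$ small using the explicit structure of $w$ near $0$, together with the smallness of $\zeta-H-w$ from \eqref{eq:target}.
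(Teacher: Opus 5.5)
There is a genuine gap in your basic design. You require $g_{a,r}=e$ off $(B_r\setminus\overline B_{r/2})\times\R$, which means $A=d^{-1}{\rm Id}$ near both $\partial B_{r/2}$ and $\partial B_r$. You then look for a field $Y$ supported in the annulus with ${\rm div}\,Y=E$. No such $A$ or $Y$ exists.

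To see this, integrate ${\rm div}(A\nabla\xi)=0$ over $\{\rho_1<|x|<\rho_2\}$, with $r/2<\rho_1<5r/8$ and $7r/8<\rho_2<r$. Since $A\nabla\xi=\nabla\xi/d$ on both spheres, you get
\[
\int_{\partial B_{\rho_2}}\frac{\partial_\nu w}{\sqrt{1+|\nabla w|^2}}=\int_{\partial B_{\rho_1}}\frac{\partial_\nu \zeta}{\sqrt{1+|\nabla \zeta|^2}}\,.
\]
The left side is $0$, because $w$ is a minimal graph on the whole ball $B_{\rho_2}$. The right side is the vertical flux of the catenoidal end. It is conserved along $\Sigma_{a,r}$, and computing it at the waist shows it is comparable to $a^{m-1}$, in particular nonzero. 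Equivalently, $\int E\neq0$, so $E$ is not the divergence of any field compactly supported in the annulus.

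Geometrically, every metric of the form \eqref{eq:metric-formula} is height independent. So $\partial_y$ is a Killing field, and its flux along the minimal sheet cannot jump from a nonzero value to zero. This obstruction kills all your variants equally: $\beta\,{\rm Id}$, $\nabla\phi\otimes X+X\otimes\nabla\phi$, and diagonal $S$ with a partition of unity. The zero set of $\nabla\xi$, which you flag as the main obstacle, is not the real difficulty.

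The missing idea, which is how the paper proves it, is to let the correction leave the annulus in a direction where that is harmless. Extend $\xi$ inside $B_{r/2}$ and let $\tilde E$ be $E$ set to $0$ on $B_{r/2}$, so that $\spt\tilde E\subset\{5r/8\le|x|\le7r/8\}$. Take $A=d^{-1}{\rm Id}+b\,e_1\otimes e_1$ with $b=(\partial_1\xi)^{-1}\int_0^{x_1}\tilde E(t,x_2,\bar x)\,dt$.

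Then $b=0$ near $\overline B_{r/2}$ and for $|\bar x|\ge r$, because the integration segment misses $\spt\tilde E$. In general, however, $b\neq0$ on $\{|x|>r,\ |\bar x|<r\}$ all the way out to $\partial W'$: the flux exits through the lateral boundary of the domain. This is exactly why (b) is stated for the slab $\{|\bar x|<r\}$ and not for $B_r$. It is also why, in the proof of Theorem \ref{thm:main}, the metrics are glued slab by slab across all of $W'$. So (b) is not a by-product of localizing to the annulus, as you read it.

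Your reason for discarding this ansatz is also incorrect. On $W'\times\R^{m-2}$ we have $\partial_{11}\xi=\partial_{11}w+O(a^{m-1}r^{-m}+a^{3/2}r^{-3/2})$, which stays bounded away from $0$ once $W'$ is small and $a\le a_0(r)$. Since $\xi$ is even in $x_1$, $\partial_1\xi=x_1\int_0^1\partial_{11}\xi(sx_1,x_2,\bar x)\,ds$ vanishes only on $\{x_1=0\}$, and only to first order there. Hence
\[
b=\frac{\int_0^1\tilde E(sx_1,x_2,\bar x)\,ds}{\int_0^1\partial_{11}\xi(sx_1,x_2,\bar x)\,ds}
\]
is smooth, with $\|b\|_{C^k}\lesssim\|\tilde E\|_{C^k}\lesssim\|\xi-(w+H)\|_{C^{k+2}}$. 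Combined with \eqref{eq:target}, this gives \eqref{e:estimate-g}. No division by $|\nabla\xi|$ and no study of its zero set is needed.
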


\begin{remark}\label{r:embedded-2}
Using the sharper estimate \eqref{e:sharper} and imposing $a \geq C r^2$, so to guarantee separation of the two ends of $\Gamma_{a,r}$ where we are changing the metric, a more careful construction of the metric following the approach of \cite{DHS} allows the error estimate 
\begin{equation}\label{e:improved}
\|g_{a,r}-e\|_{C^k} \leq C_k a^{m-1}r^{2-m-k}\, .
\end{equation}
\end{remark}

\subsection{Proof of Proposition \ref{p:exact}} 
We use the weighted catenoid argument of \cite[Section~2]{KP}. While there are some technical differences, the blueprint of the argument is taken from the latter reference. Starting from a small catenoid truncated at horizontal radius \(r\),
we deform its two ends by adding \(w\) and \(-w\), respectively.
We then correct the mean curvature by a further small deformation,
allowing the boundary height to vary. The correction is obtained by
a contraction argument. Its linear ingredient is
Lemma~\ref{l:linear}, whose proof is given in
Appendix~\ref{app:linear}.

Evenness gives \(Dw(0)=0\), and hence \(w(x)=O(|x|^2)\) and
\(Dw(x)=O(|x|)\) near the origin. We fix \(\alpha=1/2\).
Unless indicated otherwise, constants may depend on \(m,w\) and on
the derivative order, but are independent of \(a,r\).

\smallskip
\noindent\emph{The first immersion.}
We parametrize the ``unit waist" \(m\)-dimensional catenoid over $\mathbb R \times \mathbb S^{m-1} \ni (t, \omega)$ by
\[
 X(t,\omega)=(\rho(t)\omega,Z(t)),\qquad
 \rho(t)=\cosh((m-1)t)^{1/(m-1)},\qquad
 Z(t)=\int_0^t\rho(s)^{2-m}\,ds.
\]
Its induced metric and unit normal are
\[
 g_C=\rho^2(dt^2+g_{\mathbb S^{m-1}}),\qquad
 N=(-\rho^{1-m}\omega,\vartheta),\qquad
 \vartheta=\rho'/\rho=\tanh((m-1)t).
\]
The upper end is the radial graph
\[
 Z_{\rm end}(R)=\int_1^R\frac{ds}{\sqrt{s^{2m-2}-1}},
 \qquad Z(t)=Z_{\rm end}(\rho(t))\quad(t\geq0).
\]
We set
\[
 \sigma=\frac a8,\qquad L=\frac r\sigma,\qquad
 D_L=[-T_L,T_L]\times\mathbb S^{m-1},\qquad \rho(T_L)=L.
\]
Thus \(\sigma X\) has neck radius \(\sigma\) and is truncated at
horizontal radius \(r\). We assume \(a_0(r)\) is small enough so that \(L\)
is larger than a fixed constant. The factor \(1/8\) will ensure the
precise graphical region in (d).

To match the prescribed graphs, we use a deformation field that is
normal near the waist and vertical near the boundary. We choose a
smooth cutoff \(\chi(\rho)\), equal to one for \(\rho\leq2\)
and zero for \(\rho\geq3\), with \(0\leq\chi\leq1\), and set
\[
 Y=\chi N+(1-\chi)\operatorname{sgn}(t)e_{m+1}\, .
 \]
 $Y$ is smooth and transversal to the catenoid and, for later use, it is convenient to quantify this transversality with the
 quantity
 \[
 \beta :=N\cdot Y=\chi+(1-\chi)|\vartheta|\geq c>0.
\]
We define
\begin{equation}\label{eq:firstimmersion}
 q(t,\omega)=\sigma^{-1}w(\sigma\rho(t)\omega),
 \qquad G=\sigma(X+qY).
\end{equation}
On a radius-\(R\) annulus, every fixed scaled derivative of \(q/R\)
is bounded by \(C_k\sigma R\leq C_kr\); on the fixed central region,
the corresponding bounds for \(q\) are \(C_k\sigma\).
Consequently \(G\) is an immersion for small \(r\).
On the ends it is the union of the graphs
\[
 \pm\bigl(\sigma Z_{\rm end}(|x|/\sigma)+w(x)\bigr).
\]
In particular, with \(H_0=\sigma Z(T_L)\), its boundary values are
\[
 G(\pm T_L,\omega)=
       \bigl(r\omega,\pm(H_0+w(r\omega))\bigr).
\]
We have therefore obtained the prescribed boundary shape. However the mean curvature of this model surface need not be $0$. We next
further deform the surface: the aim is to correct its mean curvature while retaining the boundary
data.

\smallskip
\noindent\emph{The equation for the correction.}
We seek a function \(z\) on \(D_L\) such that the corrected immersion
\[
 G_z=G+\sigma zY=\sigma\bigl(X+(q+z)Y\bigr)
\]
is minimal. Let \(\mathcal M(v)\) be the scalar mean curvature of \(X+vY\),
pulled back to \(D_L\). We use the convention for which the Jacobi
operator is \(\Delta_C+|A_C|^2\). This map is defined for
deformations small in scaled \(C^{2,\alpha}\) norms relative to the
local radius; the bounds on \(q/R\) place \(q\) in this neighborhood.
Scaling by \(\sigma\) does not affect minimality.

To preserve the boundary shape, we require \(z\) to equal the same
unknown constant \(\lambda\) on both boundary components. We also
require \(z\) to be even in \(t\), invariant under each horizontal
coordinate reflection, and to have zero spherical mean at the
waist. The problem to solve is therefore
\begin{equation}\label{eq:nonlinearproblem}
 \begin{cases}
  \mathcal M(q+z)=0 &\text{on }D_L,\\
  z|_{t=\pm T_L}=\lambda,\\
  \displaystyle\int_{\mathbb S^{m-1}}z (0,\omega)\,d\omega=0.
 \end{cases}
\end{equation}
Indeed, its boundary condition gives
\begin{equation}\label{eq:boundaryheight}
 G_z(\pm T_L,\omega)=
   \bigl(r\omega,\pm(H+w(r\omega))\bigr),\qquad
 H=H_0+\sigma\lambda.
\end{equation}
Thus \(\lambda\) determines the adjustment of the height.
The waist condition fixes the remaining radial freedom in the
correction; it does not prescribe the boundary height.

We linearize at the catenoid. Since the normal component of \(vY\)
is \(\beta v\), the resulting operator is
\begin{equation}\label{eq:jacobi}
 \mathcal Lv=D\mathcal M(0)[v]=J(\beta v),\qquad
 J=\rho^{-2}\left(\partial_t^2+(m-2)\vartheta\partial_t
       +\Delta_{\mathbb S^{m-1}}+m(m-1)\rho^{2-2m}\right).
\end{equation}
Writing
\[
 \mathcal R(v)=\mathcal M(q+v)-\mathcal M(q)-\mathcal Lv,
\]
we can express the equation as
\begin{equation}\label{eq:split}
 \mathcal Lz=-\mathcal M(q)-\mathcal R(z).
\end{equation}
Here \(\mathcal M(q)\) is the error of the first approximation.
The remainder satisfies \(\mathcal R(0)=0\); it includes the small
linear term \((D\mathcal M(q)-\mathcal L)z\), as well as the nonlinear
terms.

\smallskip

\noindent\emph{The linear lemma.}
We use norms adapted to the radius of the catenoid. For a function
\(v\) on \(D_L\), we set
\[
 \norm{v}_{k,\alpha;\delta}
 =\sup_{1\leq R\leq L}R^{-\delta}
   \norm{v}_{C^{k,\alpha}(A_R,R^{-2}g_C)},\qquad
 A_R=D_L\cap\{R/2<\rho<2R\}.
\]
Fixed smooth charts cover the waist, and the norms include boundary
half-charts. We denote by \(C^{k,\alpha}_\delta(D_L)\) the space
with this norm restricted to functions even in \(t\) and invariant
under each horizontal coordinate reflection. These symmetries are
preserved by all the operators above.

Let \(\mathcal E_L\) be the closed subspace of
\(C^{2,\alpha}_{1/2}(D_L)\times\R\) consisting of pairs \((z,\lambda)\)
satisfying the last two conditions in \eqref{eq:nonlinearproblem}.
We set \(\mathcal F_L=C^{0,\alpha}_{-3/2}(D_L)\) and equip
\(\mathcal E_L\) with the norm
\[
 \norm{(z,\lambda)}_{\mathcal E_L}
       =\norm{z}_{2,\alpha;1/2}+L^{-1/2}|\lambda|.
\]
These are Banach spaces. The two weights match the order of
\(\mathcal L\): a function controlled at scale \(R^{1/2}\) has
second derivatives controlled at scale \(R^{-3/2}\).
The boundary constant is measured at the outer scale \(L\).

\begin{lemma}[Uniform linear estimate]\label{l:linear}
There are \(L_0>3\) and \(C_{\rm lin}>0\), depending only on \(m\)
and the fixed cutoff, such that for every \(L\geq L_0\) and
\(f\in\mathcal F_L\) there is a unique pair
\((z,\lambda)\in\mathcal E_L\) satisfying
\begin{equation}\label{eq:linear}
 \mathcal Lz=f.
\end{equation}
Moreover,
\begin{equation}\label{eq:inverse}
 \norm{(z,\lambda)}_{\mathcal E_L}
       \leq C_{\rm lin}\norm{f}_{0,\alpha;-3/2}.
\end{equation}
\end{lemma}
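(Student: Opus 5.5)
The plan is to reduce the problem to the Jacobi operator $J$ by setting $u=\beta z$, decompose in spherical harmonics on $\mathbb S^{m-1}$, treat the radial mode by hand with ODE arguments, and treat the higher modes by a uniform a priori estimate proved by contradiction and blow-up. Existence then follows from uniqueness and Fredholm theory.

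\emph{Reduction.} Since $\mathcal L z=J(\beta z)$ and $\beta$ is smooth, bounded above and below, and even in $t$, we put $u=\beta z$. The conditions become
\[
Ju=f,\qquad u|_{t=\pm T_L}=\beta(T_L)\lambda,\qquad \int_{\mathbb S^{m-1}}u(0,\omega)\,d\omega=0 .
\]
For the last condition we use that $\beta\equiv 1$ near the waist. All norms of $u$ and $z$ are then comparable uniformly in $L$. We expand $u=\sum_j u_j(t)\phi_j(\omega)$ and $f=\sum_j f_j(t)\phi_j(\omega)$ in spherical harmonics.

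The symmetries do most of the work. Invariance under each horizontal reflection forbids all degree-one harmonics, which carry the horizontal translation and rotation Jacobi fields. Evenness in $t$ forbids the vertical translation field $\vartheta$.

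\emph{Radial mode ($j=0$).} Here $J$ becomes a second-order ODE with two explicit homogeneous solutions:
\begin{itemize}
\item $\vartheta$, which is odd in $t$ and therefore excluded;
\item the dilation field $\psi=-\rho^{2-m}+Z\vartheta$, which is even in $t$, bounded, and tends to a constant as $t\to\infty$ because $m\geq3$.
\end{itemize}
Variation of parameters gives the even solution of $Ju_0=f_0$ with $u_0$ vanishing at $t=0$ (weighted Green's function integrals). Since $f_0\rho^2=O(\rho^{1/2})$, this particular solution satisfies $|u_0|\lesssim \rho^{1/2}\norm{f}_{0,\alpha;-3/2}$. The general even solution is this one plus $c\,\psi$. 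The waist mean-zero condition fixes $c=0$, because $\psi(0)=-1\neq0$. The boundary condition then simply defines $\lambda=u_0(T_L)/\beta(T_L)$, which gives $|\lambda|\lesssim L^{1/2}\norm f$. This is exactly the scaling in the $\mathcal E_L$-norm, and it yields uniqueness in this mode.

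\emph{Higher modes ($j\geq2$).} These have homogeneous Dirichlet data, and we claim a uniform estimate $\norm{u}_{0;1/2}\le C\norm f_{0,\alpha;-3/2}$. Schauder estimates on the annuli $A_R$ (in the rescaled metric $R^{-2}g_C$, including boundary half-charts) then upgrade this to the full $C^{2,\alpha}_{1/2}$ bound. We argue by contradiction: take $L_n\to\infty$ and solutions $u^n$ with weighted norm $1$ and $\norm{f^n}\to0$, and pick points where $\rho^{-1/2}|u^n|$ is at least $1/2$. There are three scenarios.
\begin{itemize}
\item \emph{The point stays at bounded $\rho$.} Then $u^n$ converges to a Jacobi field on the full catenoid with growth $O(\rho^{1/2})$, the symmetries, and no radial component. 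The growth $\rho^{1/2}$ lies strictly between the indicial roots $2-m-j$ and $j$ (note $j\geq 2>1/2$), so separation of variables forces each mode to vanish. The limit is therefore zero, a contradiction.
\item \emph{The point has $\rho\to\infty$ but $\rho/L_n\to0$.} Rescaling by $\rho$ gives a harmonic function on $\R^m\setminus\{0\}$, or on two copies of it, with growth $|x|^{1/2}$ at both $0$ and $\infty$ and no radial part. Such a function vanishes, again by the indicial-root gap.
\item \emph{The point is comparable to the boundary scale $L_n$.} The limit is harmonic on a punctured ball with zero boundary values, growth $|x|^{1/2}$, and no radial part, hence zero.
\end{itemize}
In each case we contradict the normalisation. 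Uniqueness then gives existence, since the Dirichlet problem on the compact cylinder is Fredholm of index zero.

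The step I expect to be the main obstacle is this uniform blow-up argument in the higher modes. It requires checking that the weight $1/2$ avoids every indicial root uniformly in the mode index. It also requires handling the transition region where $\chi$ mixes the normal and vertical directions, so that $\beta\neq1$ and its derivatives must be shown to be harmless in the scaled norms.
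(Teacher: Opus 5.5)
\textbf{Gap: the first blow-up scenario.} This is where the real content of the uniform estimate lies. You conclude that a symmetric, nonradial Jacobi field on the complete catenoid with growth $O(\rho^{1/2})$ vanishes because $1/2$ lies strictly between the indicial roots $2-m-\ell$ and $\ell$. The indicial roots only govern the behaviour on each end. They show that the $\rho^{\ell}$ branch is absent, i.e.\ that each mode $v_\ell$ decays like $\rho^{2-m-\ell}$ at both ends. Whether a nonzero solution with this decay exists is a global question, namely the nondegeneracy of the catenoid in that mode, and the indicial roots cannot decide it.

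The case $\ell=1$ shows this. There $1-m<1/2<1$, so your reasoning would exclude every degree-one Jacobi field of growth $O(\rho^{1/2})$. Yet $\rho^{1-m}\omega_i$, the normal component of a horizontal translation, is such a field, and it is even in $t$. Only the reflection symmetries remove it in your setting. For the same reason, the obstacle you single out (the weight avoiding the indicial roots uniformly in $\ell$) is not the real issue. The limit is analysed mode by mode, and what is needed is the absence of decaying Jacobi fields in each degree $\ell\geq 2$.

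\textbf{The missing idea.} Use that positive degree-one field. Writing $v_\ell=\rho^{1-m}p_\ell$ turns the mode-$\ell$ equation into $(\rho^{-m}p_\ell')'=(\mu_\ell-\mu_1)\rho^{-m}p_\ell$, with $\mu_\ell-\mu_1>0$ for $\ell\geq2$. By evenness $p_\ell'(0)=0$, so a nonzero solution has $p_\ell(0)\neq0$; normalise $p_\ell(0)>0$. Integrating from the waist shows that $p_\ell$ is increasing with $p_\ell\gtrsim\rho^m$. Hence $v_\ell\gtrsim\rho$, contradicting the $\rho^{1/2}$ bound. Alternatively, the decay gives $p_\ell=O(\rho^{1-\ell})\to0$ at both ends, and the maximum principle for this operator with positive zeroth-order coefficient forces $p_\ell\equiv0$. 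This is Step 2 of the paper's proof, and the same conjugation gives the paper its Lax--Milgram coercivity in Step 1.

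\textbf{Comparison with the paper once the gap is filled.} The rest of your route works and differs from the paper in two ways.
\begin{itemize}
\item You solve the radial mode explicitly by variation of parameters. The paper instead keeps the radial part inside the blow-up and kills it through the waist normalisation and the constant boundary trace. Your bound $|u_0|\lesssim\rho^{1/2}$ does hold, but only if you estimate the kernel using $\vartheta$ and the decaying combination $\psi-c_m\vartheta=O(\rho^{2-m})$. Bounding it with the two bounded fields $\vartheta,\psi$ and a Wronskian $\sim\rho^{2-m}$ loses a factor $\rho^{m-2}$.
\item You get existence from Fredholm theory plus uniqueness from the a priori estimate, with $L_0$ large. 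The paper uses Lax--Milgram on each fixed cylinder.
\end{itemize}
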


The lemma is proved in Appendix~\ref{app:linear}. We denote its
solution operator by \(\mathcal P_Lf=(z,\lambda)\).
We can now solve \eqref{eq:split} once we estimate its error and
remainder.

\smallskip
\noindent\emph{The nonlinear estimates.}
We will show that
\begin{equation}\label{eq:residualnorm}
 \norm{\mathcal M(q)}_{0,\alpha;-3/2}\leq C_{\rm err}\sigma
\end{equation}
and that, for \(\norm{z}_{2,\alpha;1/2},\norm{v}_{2,\alpha;1/2}\leq M\),
\begin{equation}\label{eq:nonlinear}
 \norm{\mathcal R(z)-\mathcal R(v)}_{0,\alpha;-3/2}
 \leq C_{\rm rem}(r+M)\norm{z-v}_{2,\alpha;1/2}.
\end{equation}
Here \(r\) and \(M\) are smaller than fixed constants.

For the first estimate, consider an upper vertical end of \(G/\sigma\).
Its graph is
\[
 Z_{\rm end}(|\xi|)+\sigma^{-1}w(\sigma\xi).
\]
We set \(p=DZ_{\rm end}(|\xi|)\), \(p_0=Dw(\sigma\xi)\), and
\(\Phi(p)=p/\sqrt{1+|p|^2}\). Both summands are minimal graphs.
The residual is therefore the divergence of
\[
 \Phi(p+p_0)-\Phi(p)-\Phi(p_0).
\]
Since \(D^2\Phi(0)=0\), the integral Taylor formula bounds this expression
by \(C(|p_0||p|^2+|p_0|^2|p|)\), with the corresponding scaled
H\"older estimates. On the radius-\(R\) end annulus,
\(p_0=O(\sigma R)\) and \(p=O(R^{1-m})\), including every fixed
scaled derivative. Consequently, for every fixed \(k\geq0\),
\begin{equation}\label{eq:residualpoint}
 |D^k\mathcal M(q)|\leq
 C_k\left(\sigma R^{2-2m-k}+\sigma^2R^{2-m-k}\right).
\end{equation}
Here the derivatives are in the horizontal coordinates on the end.
On the central region the residual is \(O(\sigma)\) in every fixed
norm. It follows that
\[
 \norm{\mathcal M(q)}_{0,\alpha;-3/2}
 \leq C\left(\sigma+\sigma^2\max\{1,L^{7/2-m}\}\right)
 \leq C\sigma.
\]
Only \(m=3\) gives a positive exponent in the maximum, and then
\(\sigma^2L^{1/2}=\sigma^{3/2}\sqrt r\leq C\sigma\).
This proves \eqref{eq:residualnorm}. The cancellation uses the
minimality of the full function \(w\).

For \eqref{eq:nonlinear}, we divide a radius-\(R\) patch by \(R\).
On this patch the reference
and correction displacements have norms \(O(\sigma R)\) and
\(O(MR^{-1/2})\). The differential of the mean-curvature map differs
from its value at the undeformed catenoid by
\(O(\sigma R+MR^{-1/2})\). A difference of corrections contributes
the factor \(R^{-1/2}\), mean curvature scales by \(R^{-1}\),
and the source weight is \(R^{3/2}\). These factors give
\eqref{eq:nonlinear}.

\smallskip
\noindent\emph{The contraction and regularity.}
We apply the linear lemma on the closed ball
\[
 \mathcal B_{L,\sigma}
   =\{(z,\lambda)\in\mathcal E_L:
                    \norm{(z,\lambda)}_{\mathcal E_L}\leq A\sigma\},
 \qquad A\geq2C_{\rm lin}C_{\rm err}.
\]
On this ball we define
\begin{equation}\label{eq:fixedpointmap}
 \mathcal T(z,\lambda)
      =\mathcal P_L\bigl[-\mathcal M(q)-\mathcal R(z)\bigr].
\end{equation}
Its values automatically satisfy the boundary and waist conditions.
Moreover, \(\norm{\mathcal T(0,0)}_{\mathcal E_L}\leq A\sigma/2\),
and its Lipschitz constant is at most
\(C_{\rm lin}C_{\rm rem}(r+A\sigma)\).
We choose \(r_0\) small enough that this constant is at most
\(1/2\), using \(\sigma<r\). Thus \(\mathcal T\) maps
\(\mathcal B_{L,\sigma}\) into itself and is a contraction.
Its fixed point solves \eqref{eq:nonlinearproblem} and satisfies
\begin{equation}\label{eq:solution}
 \norm{z}_{2,\alpha;1/2}+L^{-1/2}|\lambda|\leq C\sigma.
\end{equation}

The solution is smooth up to the boundary, and the estimate extends
to every derivative order:
\begin{equation}\label{eq:higher}
 \norm{z}_{k,\alpha;1/2}\leq C_k\sigma.
\end{equation}
To see that the same weight is retained, set \(U_R=z/R\) and
\(Q_R=q/R\) on a rescaled patch. Subtracting the equation at \(Q_R\)
from the equation at \(Q_R+U_R\) gives a uniformly elliptic
quasilinear equation for \(U_R\).
Its inhomogeneous term is the rescaled reference residual, and
every fixed derivative of that term is bounded by
\(C_k\sigma R^{-1/2}\), by \eqref{eq:residualpoint}.
All remaining terms vanish at \(U_R=0\). Starting from
\eqref{eq:solution}, Schauder induction on nested scaled patches
therefore preserves the bound \(C_k\sigma R^{-1/2}\) for \(U_R\).
At the outer boundary its trace is the constant \(\lambda/R\),
with the same bound, so boundary estimates apply as well.
The higher constants may depend on higher derivatives of \(w\), but
no new smallness threshold depending on \(k\) is needed.

\smallskip
\noindent\emph{The immersion and the geometric estimates.}
For the solution just obtained, we set
\[
 F_0=G_z=G+\sigma z Y=\sigma\bigl(X+(q+z)Y\bigr)
 \quad\text{on }D_L.
\]
On the fixed central region this is a small smooth perturbation
of a catenoid, and on the ends it is a vertical graph.
The relative \(C^1\) bounds ensure that it is a smooth minimal immersion.
We define
\[
 \overline F_{a,r}(\omega,s)=F_0(T_Ls,\omega),
 \qquad (\omega,s)\in\mathbb S^{m-1}\times[-1,1]\, .
\]
Taking \(\Sigma_{a,r}\) to be the image of the map proves (b).

The horizontal coordinate of \(F_0/\sigma\) is
\[
 \bigl(\rho-\chi(q+z)\rho^{1-m}\bigr)\omega.
\]
Where \(\chi\ne0\), it has radius at most \(3+C\sigma<4\)
after decreasing the threshold for \(a\). Thus that entire part of
the image has horizontal radius less than \(4\sigma=a/2\).
On the vertical ends the horizontal coordinate is exactly
\(\sigma\rho\omega\). Consequently the part with \(a<|x|<r\)
consists precisely of two graphs. By reflection symmetry they are
\(\pm\zeta\), where
\begin{equation}\label{eq:physical}
 \zeta(x)=\sigma Z_{\rm end}(|x|/\sigma)
                      +w(x)+\sigma z_+(x/\sigma).
\end{equation}
Here \(z_+\) denotes the correction expressed in horizontal
coordinates on the upper end. Recall from \eqref{eq:boundaryheight}
that
\[
 H=H_0+\sigma\lambda=\sigma Z(T_L)+\sigma\lambda.
\]
The boundary condition in \eqref{eq:nonlinearproblem} gives
\[
 \overline F_{a,r}(\omega,\pm1)
       =\bigl(r\omega,\pm(H+w(r\omega))\bigr),
\]
which proves (c). The graphical description above proves (d).

To estimate the height, we use \(m\geq3\):
\[
 c_m:=Z(+\infty)=\int_1^\infty
                  \frac{ds}{\sqrt{s^{2m-2}-1}}<\infty,\qquad
 c_m-Z_{\rm end}(L)=O(L^{2-m}).
\]
Moreover \(|\lambda|\leq C\sigma L^{1/2}=C\sqrt{\sigma r}\).
It follows that
\begin{equation}\label{eq:height}
 H=c_m\sigma+
 O\left(\sigma^{m-1}r^{2-m}+\sigma^{3/2}r^{1/2}\right).
\end{equation}
In particular \(H/\sigma\to c_m>0\) at fixed \(r\), and
\(0<H\leq C a\) for sufficiently small \(a\).
We take \(a_0(r)\leq r^2\), and also \(a_0(r)<r/2\), which proves (a).

For \(r/2<|x|<r\), \eqref{eq:higher} implies
\[
 \left|D^k\bigl(\sigma z_+(x/\sigma)\bigr)\right|
       \leq C_k\sigma^{3/2}r^{1/2-k}.
\]
For \(k=0\), subtracting \(\sigma\lambda\) obeys the same bound.
The explicit catenoid profile similarly gives, for every \(k\geq0\),
\[
 \left|D^k\left(\sigma Z_{\rm end}(|x|/\sigma)
                         -\sigma Z(T_L)\right)\right|
       \leq C_k\sigma^{m-1}r^{2-m-k}.
\]
Substitution in \eqref{eq:physical}, followed by \(\sigma=a/8\),
proves \eqref{eq:target} and hence (e).

Finally, the relative \(C^1\) bounds used to construct \(F_0\) also give
\[
 F_0^*g_{\rm Euc}\leq C\sigma^2g_C.
\]
Indeed, on each radius-\(R\) patch the scaled displacement and its first
derivatives are bounded by
\(C(\sigma R+\sigma R^{-1/2})\leq Cr\).
The area of the compact annulus, counting multiplicity, is therefore
at most
\[
 C\sigma^m\int_{-T_L}^{T_L}\rho(t)^m\,dt
 \leq C\sigma^mL^m=Cr^m.
\]
The middle inequality follows from boundedness of \(\rho\) on
\([0,1]\) and a change of variables from \(t\) to \(\rho\) on
\([1,T_L]\), where \(\rho'/\rho\geq c>0\).
Reparametrization preserves area, and removing
the boundary does not change its \(m\)-dimensional measure. Hence
\[
 \int_{\mathbb S^{m-1}\times(-1,1)}
                d\operatorname{vol}_{F_{a,r}^*g_{\rm Euc}}
 \leq Cr^m,
\]
which implies (f) and completes the proof.

\begin{remark}
The choice \(\sigma=a/8\) puts the entire transition from normal
to vertical deformation inside \(\{|x|<a\}\).
If \(a\) is used as the neck scale, the proof gives the
graphical region \(Ka<|x|<r\) for a fixed \(K>1\).
No hypothesis on \(D^2w(0)\) is needed.
\end{remark}

\begin{remark}\label{r:embedded-3}
We explain here how to achieve an embedded model, for which the sharper estimate \eqref{e:sharper}. We cannot use the strategy above, since it does not give good enough bounds when $H$ falls in the range $(C r^2, \gamma r)$. The idea is then to construct $\Sigma_{a,r}$ in the following more sophisticated way. We split the classical catenoid of waist $a$ in two regions: the cylindrical region contained in $B_{C_0 a}\times \mathbb R$ and the annular region $(B_r \setminus B_{C_0 a})\times \mathbb R$. In the second, outer, region the catenoid consists of two graphs. We then solve the graphical minimal surface equation, imposing the boundary conditions $H(a,r) +w$ and $- H(a,r) -w$ on the outermost boundaries, and some yet unspecified boundary data $z$ and $-z$ at the innermost boundary. We then perturb the inner region of the catenoid to find a connected minimal surface which matches the two boundary data $-z$ and $z$. The union of the three surfaces is now a single embedded Lipschitz surface, but its tangents will be, most likely, discontinuous across the four interfaces. However, if we could choose the boundary data $z$ so that the tangents are continuous, then we have achieved a single smooth minimal surface. It is possible to find $z$ by setting up a classical implicit function theorem, using the symmetries and degrees of freedom of the problem to compensate the kernel of a suitable linearized operator. The improved estimate \eqref{e:sharper} follows then from classical estimates on minimal graphs.  
\end{remark}

\subsection{Proof of Proposition \ref{p:glued}} 
As for the proof of Proposition \ref{p:base-model} the idea is to apply Lemma \ref{lem:metric-formula} as in the proof of Proposition \ref{p:base-model}. We therefore first wish to find the matrix $A$ with the property that ${\rm div}\, (A\nabla\xi) = 0$ on $B_r\setminus B_{r/2}$, and $A$ is  $(1+|\nabla \xi|^2)^{-1/2}$ times the identity in a neighborhood of $\partial B_{r/2}$ and on $\{|\bar x|\geq r\}$. Then Lemma \ref{lem:metric-formula} will deliver a metric $g$ which makes the graphs of $\xi$ and $-\xi$ both minimal and which is the Euclidean metric in a neighborhood of $\partial B_r \times \mathbb R$ and $\{|\bar x|\geq r\} \times \mathbb R$. We continue the metric outside as the Euclidean metric. As for the estimates, we claim that, after setting $d = (1+ |\nabla \xi|^2)^{1/2}$, $A$ can be chosen so that
\begin{equation}\label{e:stimella}
\|A-d^{-1} {\rm Id}\|_{C^k}\leq C_k \|\xi - (w+H)\|_{C^{k+2}}\, .
\end{equation}
It is easy to see that 
\[
\|\xi-(w+H)\|_{C^k}\leq C_k\left(a^{m-1}r^{2-m-k}
                   +a^{3/2}r^{1/2-k}\right).
\]
We can therefore ensure $\|\nabla \xi\|_{C^0} \leq 1$ simply by restricting first the size of $W'$ so that $\|\nabla w\|_{C^0} \leq \frac{1}{2}$ and then restricting the size of $r$ and of $a\leq a_0 (r)$. Hence the explicit formula of Lemma \ref{lem:metric-formula} and \eqref{e:stimella} guarantee the desired estimates on $g_{a,r}-e$. 

We now come to the explicit construction of $A$. First of all, we can extend $\xi$ to the interior by setting it equal to $w+H$ in $B_{r/4}$ and using a partition of unity (with a radial cutoff). This will anyway retain the estimates on $\xi-(w+H)$. Next we compute  
\[
E:= - {\rm div} (d (x)^{-1} \nabla \xi (x))\, .
\]
$E$ vanishes identically in a neighborhood of $\partial B_{r/2}$. Hence, if we define $\tilde{E}$ as identically $0$ on $B_{r/2}$ and as identically $E$ outside, $\tilde{E}$ keeps the smoothness of $E$. In fact, because $w$ solves the Euclidean minimal surface equation, we also easily see that 
\[
\|\tilde{E}\|_{C^k} \leq C_k \|\xi - (w+H)\|_{C^{k+2}}\, .
\]
As in the proof of Proposition \ref{p:base-model}, we know that $D^2 w (0)$ is diagonal and with at least one nonzero entry, which upon relabeling the coordinates we can assume is the first one. We make the ansatz $A= d^{-1} {\rm Id} + b e_1\otimes e_1$ and then define
\[
b (x) = (\partial_1 \xi) (x)^{-1} \int_0^{x_1} \tilde{E} (t,x_2, \bar{x})\, dt \, 
\]
for $x_1\neq 0$, and extend it smoothly to $x_1=0$ using that $\partial^2_{11} \xi (0, x_2, \bar{x})$ does not vanish and De L'H\^ospital's Theorem. Note that $b$ is identically equal to $0$ for $x$ in a neighborhood of $B_{r/2}$ and when $|\bar{x}|\geq r$ (in both cases, the integration segment in the definition of $b$ lies all in the region where $\tilde{E}$ vanishes). The estimates for $b$ are straightforward. 

Clearly, the construction above does not guarantee that ${\rm div} (A\nabla \xi) = 0$ in $B_{r/2}$, but on the other hand this is not required.

\section{Proof of Theorem \ref{thm:main}}

We first fix $w$ as in Proposition \ref{p:base-model} and vanishing at the origin. Further we select $\bar p_k = (k^{-1}, 0, \ldots, 0)$, $p_k = (0,0, \bar{p}_k)$, and $r_k = 2^{-k}$, for $k\in \mathbb N\setminus \{0\}$. For $k$ sufficiently large, Proposition \ref{p:glued} will apply with $r=r_k$. Next choose $a_k$ small enough (depending on $r$) so that the conclusions of Proposition \ref{p:exact} and \ref{p:glued} apply after we translate the points $\bar{p}_k$ to the origin: we denote by $\Gamma_{a_k,r_k}$ the corresponding surfaces and by $g_k$ the corresponding metrics. We will also impose the inequality $a_k \leq 2^{-2^k}$. In particular, let $W'$, $h_k = H (a_k, r_k)$ and $\Gamma_{a_k, r_k}$ be as in Proposition \ref{p:glued} and we record the fact that $h_k \leq C 2^{-k}$. 

Assuming $k$ is large enough, that the $m-2$-dimensional balls
\[
S_k :=\{x: |\bar{x}-\bar{p}_k|\leq r_k\} \subset \mathbb R^{m-2}
\]
are pairwise disjoint. We start by defining the surface $\Sigma$ of Theorem \ref{thm:main} on the union of the slabs $W' \times S_k \times \mathbb R$ and this is done by setting it equal to the surface $p_k + \Gamma_{a_k, r_k}$ inside $B_{r_k} (p_k)\times \mathbb R$ and equal to the the union of the two graphs $\pm (h_k + w)$ on the remaining part of each slab, namely $((W'\times S_k) \setminus (B_{r_k} (p_k)) \times \mathbb R$. Notice that in a neighborhood of the boundary of each slab $W' \times S_k \times \mathbb R$ the surface coincides with the graphs of $\pm (h_k + w)$. We now need a smooth function as in the following lemma.

\begin{lemma}\label{lem:smooth_plateaus}
Assuming $r_k$, $\bar{p}_k$, and $h_k$ are as above, then there exists an integer \(K\) and a function \(f\in C^\infty(\R^{m-2})\)
such that
\[
 f(0)=0,\qquad f(x)>0\quad\text{for }x\ne0,\qquad
 f\equiv h_k\quad\text{on }B_{r_k}(\bar p_k)
 \quad\text{for every }k\geq K.
\]
Moreover, \(f\) can be chosen so that all its derivatives vanish at the origin.
\end{lemma}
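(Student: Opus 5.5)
We construct $f$ by first prescribing it on a disjoint family of closed balls, then filling in the gaps with a smooth positive function that is flat at the origin. Recall $\bar p_k=(k^{-1},0,\dots,0)$, $r_k=2^{-k}$ and $0<h_k\leq C2^{-k}$; positivity of $h_k$ comes from Proposition \ref{p:exact}(a). Choose $K$ so large that for $k\geq K$ the doubled balls $B_{2r_k}(\bar p_k)$ are pairwise disjoint and avoid the origin. This is possible because consecutive centers are at distance $\frac1{k(k+1)}\gg 2^{2-k}$ and $|\bar p_k|=k^{-1}> 2r_k$.

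\textbf{Step 1: a flat positive base function.} Let $f_0(x)=e^{-1/|x|^2}$ for $x\neq 0$ and $f_0(0)=0$. It is smooth, positive away from $0$, and all its derivatives vanish at $0$.

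\textbf{Step 2: the plateaus.} Fix a smooth radial cutoff $\psi$ with $\psi\equiv1$ on $B_1$, $\operatorname{spt}\psi\subset B_2$ and $0\le\psi\le1$. Set $\psi_k(x)=\psi((x-\bar p_k)/r_k)$ and define
\[
f(x)=\Bigl(1-\sum_{k\geq K}\psi_k(x)\Bigr)f_0(x)+\sum_{k\geq K}\psi_k(x)h_k .
\]
The supports of the $\psi_k$ are disjoint, so at each point $x\neq0$ at most one term contributes. Hence $f$ is locally a finite sum and is smooth on $\R^{m-2}\setminus\{0\}$.

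On $B_{r_k}(\bar p_k)$ we have $f\equiv h_k$. At any $x\neq0$, $f(x)$ is a convex combination of the positive numbers $f_0(x)$ and $h_k$, so $f(x)>0$. Also $f(0)=f_0(0)=0$.

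\textbf{Step 3: smoothness and flatness at the origin (the main obstacle).} This step is where the rapid decay of $r_k$ and $h_k$ relative to $|\bar p_k|$ is essential. It suffices to show $|D^j(f-f_0)(x)|=o(|x|^N)$ for every $j,N$ as $x\to0$; then $f$ is $C^\infty$ with all derivatives zero at $0$, since $f_0$ already has this property.

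On $\operatorname{spt}\psi_k$ we have $|x|\sim k^{-1}$, and there $f-f_0=\psi_k(h_k-f_0)$. By the Leibniz rule,
\[
|D^j(f-f_0)|\le C_j\sum_{i\le j} r_k^{-i}\,\bigl|D^{j-i}(h_k-f_0)\bigr| .
\]
The factors are bounded as follows:
\begin{itemize}
\item $r_k^{-i}\le 2^{jk}$;
\item $|D^{\ell} f_0|\lesssim e^{-ck^2}$ on this region, since $|x|\sim k^{-1}$;
\item $h_k\le C2^{-k}$.
\end{itemize}
The term $2^{jk}h_k$ is the delicate one: with only $h_k\le C2^{-k}$ it is not small, since $2^{jk}\cdot2^{-k}$ grows for $j\geq 2$.

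The fix is that $h_k\le Ca_k$ with $a_k\le 2^{-2^k}$. This follows from the height expansion \eqref{eq:height}, which gives $H\le Ca$, combined with the choice $a_k\le 2^{-2^k}$ made in this section. Using it, $2^{jk}h_k\le C\,2^{jk-2^k}$. This decays faster than any power of $k^{-1}\sim|x|$.

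The $f_0$ terms give $2^{jk}e^{-ck^2}$, which is also $o(k^{-N})$. Hence $D^j(f-f_0)\to0$ faster than any power of $|x|$. Standard arguments then give smoothness at $0$ with all derivatives vanishing, completing the proof of Lemma \ref{lem:smooth_plateaus}.

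If one preferred to rely only on $h_k\le C2^{-k}$, one could instead enlarge $K$ and replace $r_k$ by a smaller plateau radius. However, the statement fixes $r_k=2^{-k}$, so the superexponential smallness of $a_k$, and hence of $h_k$, is the key input.
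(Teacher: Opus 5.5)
Your proof is correct and has the same form as the paper's: $f=g+\sum_k\chi_k(h_k-g)$ with $g=e^{-1/|x|^2}$, disjoint cutoffs, and positivity from a convex combination. The one real difference is the scale of the transition, and it changes which input the argument needs.

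\emph{The paper's route.} The cutoffs live on the scale $\delta_k=\frac1{10k^2}$ of the spacing between consecutive centers, with $K$ chosen so that $r_k<\delta_k/2$. So $f\equiv h_k$ on the much larger ball $B_{\delta_k/2}(\bar p_k)\supset B_{r_k}(\bar p_k)$. Each derivative of the cutoff then costs only a polynomial factor $\delta_k^{-1}\sim k^2$. The recorded bound $h_k\le C2^{-k}$, which decays faster than any power of $1/k\sim|x|$, is therefore enough.

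\emph{Your route.} You transition on the scale $r_k$, which costs $2^{jk}$. You compensate with the superexponential bound $h_k\le Ca_k\le C2^{-2^k}$. That bound is legitimate: the proof of Proposition \ref{p:exact} gives $H\le Ca$ with $C$ independent of $r$ (from \eqref{eq:height}, using $\sigma<r$), and the proof of Theorem \ref{thm:main} imposes $a_k\le 2^{-2^k}$. Note, though, that this estimate comes from inside the proof of the proposition, whose statement only gives $H\le Cr^2$. It also ties the lemma to the particular choice of $a_k$.

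The paper's version is more robust. It works for any plateau heights decaying faster than every power of $k^{-1}$, regardless of how the necks are chosen.

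Your closing remark has the direction wrong. The statement does not fix the size of the plateau; it only requires $f\equiv h_k$ on $B_{r_k}(\bar p_k)$. The natural way to avoid the superexponential input is to enlarge the transition region up to the spacing $\sim k^{-2}$, as the paper does. Shrinking the plateau radius would only make the cutoff derivatives larger.
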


Assuming the lemma, we then define the surface $\Sigma$ on the remaining portion of $W'\times \mathbb R^{m-2}$ by setting it equal to the union of the graphs $\pm (w+f)$. We are then in a position of applying Proposition \ref{p:base-model} and Proposition \ref{p:glued}. In particular we assume $W'$ and $\bar{W}'$ are small enough so that Proposition \ref{p:base-model} applies, and we get a correspondingly small neighborhood of the origin $\bar{W}'\subset \mathbb R^{m-2}$ and a metric $g_0$. We then set the final metric $g$ to be identically equal to $g_0$ in $W'\times \bar{W}'\times \mathbb R$ outside the slabs and equal to $g_k (x) := g_{a_k, r_k} (x-p_k)$ in each slab $W'\times (\bar{W}' \cap S_k)\times \mathbb R$. Since both $g_0$ and $g_k$ coincide with the Euclidean metric in a neighborhood of the portion of the boundary of $W'\times S_k \times \mathbb R$ which lies inside $W'\times \bar{W}'\times \mathbb R$, the metric is obviously smooth outside $W'\times \{\bar x = 0\}$. The fact that it is also smooth across $W'\times \{(\bar x =0\}$ follows easily from the estimates in Proposition \ref{p:glued} and the choice of the parameters (recall that $a_k \leq 2^{-2^k}$ and $r_k = 2^{-k}$, so the rapid decay of $a_k$ compared to $r_k$ offsets any negative power of $r_k$ appearing in the estimates). 

The other two claims of the theorem, namely the stationarity of the integral varifold associated with $\{0\}\cup \Sigma$ in the Riemannian manifold $(W'\times \bar{W}' \times \R,g)$ and the fact that the varifold tangent at $0$ is the horizontal plane with multiplicity $2$, are direct consequences of the construction and details are left to the reader. It only remains to show the existence of the function $f$, namely to prove Lemma \ref{lem:smooth_plateaus}, which is fairly straightforward: we include a sketch for the reader's convenience.  

\begin{proof}[Proof of Lemma \ref{lem:smooth_plateaus}]
We construct the transitions on the scale of the separation between
the centers, which is comparable to \(k^{-2}\).
Set
\[
 \delta_k=\frac{1}{10k^2}.
\]
The balls \(B_{\delta_k}(\bar p_k)\) are pairwise disjoint, because
their projections onto the first coordinate axis are disjoint:
\[
 \delta_k+\delta_{k+1}
 <\frac1k-\frac1{k+1}.
\]
Since \(2^{-k}=o(k^{-2})\), we can choose \(K\) such that
\(r_k<\delta_k/2\) for every \(k\geq K\).
Let \(\chi\in C_c^\infty(B_1)\) satisfy \(0\leq\chi\leq1\) and
\(\chi\equiv1\) on \(B_{1/2}\), and define
\[
 \chi_k(x)=\chi\!\left(\frac{x-\bar p_k}{\delta_k}\right),
 \qquad
 g(x)=
 \begin{cases}
  e^{-1/|x|^2},&x\ne0,\\
  0,&x=0.
 \end{cases}
\]
We set
\[
 f(x)=g(x)+\sum_{k\geq K}\chi_k(x)\bigl(h_k-g(x)\bigr).
\]
The verification that the function satisfies all the requirements is straightforward and left to the reader.
\end{proof}

\begin{remark}\label{r:embedded-4}
As already pointed out earlier, since we are choosing $a_k \ll r_k^2$, the neck regions in the above constructions will not be embedded. If we were to impose $a_k \geq C r_k^2$, we would achieve embedded neck regions, but the construction would not yield a $C^\infty$ metric. However, using the sharper estimate \eqref{e:improved} and the Ansatz $a_k = r_k^{2-\varepsilon}$ for $C$ large enough, we would get 
\begin{equation}\label{e:improved-2}
\|g-e\|_{C^j} \leq C_j r_k^{m-j-(m-1) \varepsilon}\, 
\end{equation}
in each slab.
This would still recover a $C^{m-1, 1-(m-1) \varepsilon}$ metric, as claimed in the first version of \cite{DHS}. 
\end{remark}

\appendix
\section{The linear estimate}\label{app:linear}

We prove Lemma~\ref{l:linear}, retaining the notation and symmetry
assumptions introduced in the main proof. The main point is to obtain
a bound independent of the truncation radius \(L\), since in the
geometric construction the ratio \(L=r/\sigma\) becomes arbitrarily
large. We first solve the equation on each fixed cylinder by separating
the spherical mean from the remaining angular components. The
normalization at the waist determines the former, while the reflection
symmetries give a coercive equation for the latter. We then prove
uniformity by a compactness argument, in which the choice of the
weight and the fact that the boundary trace is constant both play
a role. The arguments are standard in the literature and we just report them for the reader's convenience. 
Two examples are \cite[Proposition 4.3]{FP} and \cite[Section 3.3, Proposition 3]{Pacard}: for instance 
the contradiction argument used here to get the desired estimates in Step 3 is inspired by analogous arguments in
the latter references, the main difference being just on the choice of weights and the boundary data for the problem.
The proof uses also several standard facts in the theory of elliptic PDEs, such as interior and boundary Schauder estimates, Lax Milgram and interior derivative estimates for harmonic functions; a classical reference for these tools is \cite{GT}

\begin{proof}[Proof of Lemma~\ref{l:linear}]
Since \(\mathcal Lz=J(\beta z)\), it is convenient to work with the
normal component of the deformation, namely
\[
 v=\beta z,\qquad b=\beta(T_L)\lambda.
\]
The functions \(\beta\) and \(\beta^{-1}\), together with their scaled
derivatives, are uniformly bounded, so multiplication by either of
them preserves the weighted spaces with constants independent of \(L\).
This change of unknown also preserves all the conditions that accompany
the equation: \(\beta\) has the required reflection symmetries, equals
one at the waist, and takes the same constant value on the two outer
boundary components. Thus the problem becomes
\begin{equation}\label{eq:app-jacobi}
 Jv=f,\qquad v|_{t=\pm T_L}=b,\qquad
 \int_{\mathbb S^{m-1}}v(0,\omega)\,d\omega=0,
\end{equation}
with \(v\) even in \(t\) and invariant under each horizontal coordinate
reflection. We shall solve this problem and establish
\begin{equation}\label{eq:app-estimate}
 \norm{v}_{2,\alpha;1/2}+L^{-1/2}|b|
          \leq C\norm{f}_{0,\alpha;-3/2};
\end{equation}
the corresponding assertion for \((z,\lambda)\) will then follow by
multiplication by \(\beta^{-1}\).

\smallskip
\noindent\emph{1. Solvability on a fixed cylinder.}
We begin with \(L\) fixed, allowing the constants in this part of the
argument to depend on \(L\). Because the desired solution is even in
\(t\), we may solve on the half-cylinder
\(D_L^+=[0,T_L]\times\mathbb S^{m-1}\), where the symmetry is expressed
by the Neumann condition \(\partial_t v=0\) at \(t=0\).
The useful decomposition is
\[
 \bar v(t)=\frac{1}{|\mathbb S^{m-1}|}
              \int_{\mathbb S^{m-1}}v(t,\omega)\,d\omega,
 \qquad v^\perp=v-\bar v,
\]
and we write \(f=\bar f+f^\perp\) in the same way. Since the
coefficients of \(J\) depend only on \(t\), the operator commutes
with spherical averaging, so these two components can be solved
for separately.

For the spherical mean, the equation and the conditions at the waist
give the ordinary differential equation
\[
 \bar v''+(m-2)\vartheta\bar v'
       +m(m-1)\rho^{2-2m}\bar v=\rho^2\bar f,
 \qquad \bar v(0)=\bar v'(0)=0.
\]
The first initial condition is precisely the waist normalization,
whereas the second follows from evenness. They determine a unique
solution on \([0,T_L]\), whose value at \(T_L\) is then taken as the
boundary constant \(b\). This explains the role of the free boundary
value in the lemma: the radial component is determined by its data at
the waist, and its outer value is an output of the construction.

It remains to solve for the part with zero spherical mean. Since the
full boundary trace is constant, subtracting the radial component
leaves homogeneous Dirichlet data at the outer boundary, and therefore
\[
 Jv^\perp=f^\perp,\qquad
 \partial_t v^\perp|_{t=0}=0,\qquad v^\perp|_{t=T_L}=0.
\]
Here and below the zero-mean condition on a nonradial function is
imposed on every spherical slice. To obtain a coercive formulation of
this problem, we use the positive function
\[
 h=\rho^{1-m},\qquad v^\perp=hp,\qquad
 \mu_\ell=\ell(\ell+m-2).
\]
The numbers \(\mu_\ell\) are the eigenvalues of
\(-\Delta_{\mathbb S^{m-1}}\), with
\(\mu_1=m-1\) and \(\mu_2=2m\). The choice of \(h\) is suggested by
the Jacobi fields generated by horizontal translations: their normal
components are, up to sign, \(h\omega_i\), and hence
\(J(h\omega_i)=0\). Dividing by \(h\) replaces the radial Jacobi
potential by the constant \(\mu_1\). More explicitly, substitution
in the equation, using \(h'/h=-(m-1)\vartheta\), gives
\begin{equation}\label{eq:app-conjugated}
 (\rho^{-m}p_t)_t+
 \rho^{-m}(\Delta_{\mathbb S^{m-1}}p+\mu_1p)=\rho f^\perp,
 \qquad p_t|_{t=0}=0,\quad p|_{t=T_L}=0.
\end{equation}
The Neumann condition is preserved because \(h'(0)=0\).

The reflection symmetries now supply the needed positivity.
A spherical harmonic of degree one is a linear combination of the
coordinate functions \(\omega_i\), so a function invariant under
each coordinate reflection has no component of this degree.
Once its spherical mean is also removed, the first possible
eigenvalue is \(\mu_2\); consequently,
\[
 \int_{\mathbb S^{m-1}}|\nabla_\omega\psi|^2\,d\omega
       \geq\mu_2\int_{\mathbb S^{m-1}}|\psi|^2\,d\omega
\]
for every symmetric function \(\psi\) of zero mean. Applied on each
slice, this inequality controls the negative zeroth-order term in
the bilinear form
\[
 B(p,\psi)=\int_{D_L^+}\rho^{-m}
     \bigl(p_t\psi_t+\nabla_\omega p\cdot\nabla_\omega\psi
                                -\mu_1p\psi\bigr)\,dt\,d\omega,
\]
because
\[
 B(p,p)\geq\int_{D_L^+}\rho^{-m}
       \left(|p_t|^2+
          \left(1-\frac{\mu_1}{\mu_2}\right)|\nabla_\omega p|^2
       \right)\,dt\,d\omega.
\]
For fixed \(L\), the weight \(\rho^{-m}\) is bounded above and below
by positive constants, and the same angular inequality controls
the \(L^2\) norm. Thus \(B\) is coercive on the closed subspace of
\(H^1(D_L^+)\) consisting of symmetric functions with zero spherical
mean and zero trace at \(t=T_L\).

Lax--Milgram gives a unique solution in this space of the weak equation
\[
 B(p,\psi)=-\int_{D_L^+}\rho f^\perp\psi\,dt\,d\omega.
\]
Although the variational problem was formulated in the symmetry
subspace, this identity holds for every test function with zero outer
trace. Indeed, averaging a test function over the coordinate reflections
and subtracting its spherical mean leaves both sides unchanged.
We may therefore apply elliptic regularity for the mixed boundary
problem to obtain \(p\in C^{2,\alpha}\), with the natural Neumann
condition at \(t=0\) and the prescribed Dirichlet condition at \(T_L\).
The function \(v=\bar v+hp\), extended evenly across \(t=0\), solves
\eqref{eq:app-jacobi}. Uniqueness follows from the uniqueness of both
the radial initial-value problem and the variational problem.

The construction just given also indicates what remains to be proved.
Its coercivity constant can depend on \(L\), so it does not yet supply
the uniform bound required in the contraction argument. To obtain that
bound, we first identify the homogeneous solutions that could arise
as limits of a sequence violating it.

\smallskip
\noindent\emph{2. Jacobi fields on the complete catenoid.}
We claim that the only symmetric Jacobi field on the complete catenoid
satisfying
\begin{equation}\label{eq:app-growth}
 Jv=0,\qquad
 \int_{\mathbb S^{m-1}}v(0,\omega)\,d\omega=0,\qquad
 |v|\leq C\rho^{1/2}
\end{equation}
is the zero field. The growth condition is exactly the one allowed by
the weighted norm in the lemma, so this statement will rule out a
nonzero limit near the waist when the truncation radius tends to
infinity.

The radial component vanishes by the homogeneous initial-value
problem from Step~1. We examine the remaining components by expanding
\(v(t,\cdot)\) in a real spherical-harmonic basis. The symmetries
exclude degree one, so any surviving nonconstant component has degree
\(\ell\geq2\). If \(v_\ell(t)\) denotes the coefficient of one fixed
basis element and we write \(v_\ell=\rho^{1-m}p_\ell\), the same
conjugation as in \eqref{eq:app-conjugated} gives
\begin{equation}\label{eq:groundstate}
 (\rho^{-m}p_\ell')'
       -(\mu_\ell-\mu_1)\rho^{-m}p_\ell=0,
 \qquad p_\ell'(0)=0.
\end{equation}
A nonzero solution has \(p_\ell(0)\ne0\), since otherwise both of its
initial data would vanish; changing the sign of the coefficient, we
may therefore suppose that \(p_\ell(0)>0\).

The positive difference \(\mu_\ell-\mu_1\) forces such a solution to
grow. Indeed, integrating \eqref{eq:groundstate} from the waist yields
\[
 \rho(t)^{-m}p_\ell'(t)
   =(\mu_\ell-\mu_1)
       \int_0^t\rho(s)^{-m}p_\ell(s)\,ds.
\]
As long as \(p_\ell\) is positive, this identity makes it increasing,
so it can never reach zero. For \(t\geq1\), the integral over the
fixed interval \([0,1]\) is already a positive lower bound for the
right-hand side, and consequently
\(p_\ell'(t)\geq c\rho(t)^m\). Since \(\rho(t)\) is comparable to
\(e^t\) for large \(t\), a further integration gives
\(p_\ell(t)\geq c'\rho(t)^m\), and hence
\[
 v_\ell(t)=\rho(t)^{1-m}p_\ell(t)\geq c'\rho(t)
\]
for all sufficiently large \(t\). On the other hand, projecting the
bound in \eqref{eq:app-growth} onto the chosen spherical harmonic
gives \(|v_\ell(t)|\leq C_\ell\rho(t)^{1/2}\), a contradiction.
Every angular coefficient must therefore vanish along with the radial
component, which proves the claim.

\smallskip
\noindent\emph{3. The uniform weighted estimate.}
We now return to the solutions on finite cylinders. Local elliptic
estimates control their derivatives once their weighted supremum is
controlled, so we introduce
\[
 S_L(v)=\sup_{D_L}\rho^{-1/2}|v|
\]
and first record the estimate
\begin{equation}\label{eq:schauder}
 \norm{v}_{2,\alpha;1/2}
 \leq C\left(\norm{f}_{0,\alpha;-3/2}+S_L(v)\right),
\end{equation}
where \(C\) is independent of \(L\geq L_0\).
To see why the constant is uniform, one applies Schauder estimates
on annular patches after scaling each patch by its radius \(R\).
The catenoid has uniformly controlled geometry in these coordinates,
and the factor \(R^2\) contributed by the second-order equation
matches the difference between the two weights. Near the waist one
uses a fixed family of charts, while near the outer sphere one uses
boundary charts at scale \(L\). In the latter charts, the Dirichlet
trace is the constant \(b\), whose weighted norm is bounded by
\(S_L(v)\), so no additional boundary term is needed in
\eqref{eq:schauder}.

It remains to prove
\begin{equation}\label{eq:app-supremum}
 S_L(v)\leq C\norm{f}_{0,\alpha;-3/2}.
\end{equation}
The difficulty is that a sequence may have most of its weighted
amplitude farther and farther out along the catenoid. We therefore
argue by contradiction and center the compactness argument at points
where that amplitude is maximal. If \eqref{eq:app-supremum} failed,
linearity would allow us to normalize a sequence of solutions so that
\[
 Jv_j=f_j,\qquad
 S_{L_j}(v_j)=1,\qquad
 \norm{f_j}_{0,\alpha;-3/2}\longrightarrow0.
\]
Each \(v_j\) retains the symmetries and the zero waist average, and
its common boundary value will be denoted by \(b_j\).
We choose a maximizing point \((t_j,\omega_j)\), set
\(R_j=\rho(t_j)\), and use evenness to arrange \(t_j\geq0\).

After passing to a subsequence, we may assume \(L_j\to\infty\).
Indeed, if a subsequence converged to a finite truncation radius,
we could identify the cylinders with a fixed cylinder and use
\eqref{eq:schauder} to obtain a nonzero homogeneous solution of
\eqref{eq:app-jacobi}, contradicting the uniqueness proved in Step~1.
For an exhausting sequence there are two possible locations for the
maximizing points, and these give two different limiting equations.

\smallskip
\noindent\emph{Maximizing points at bounded radii.}
Suppose first that \(R_j\) remains bounded. Then the maximizing points
stay in a compact part of the catenoid, where
\eqref{eq:schauder} and a diagonal compactness argument give convergence
to a Jacobi field \(v_\infty\) on the complete catenoid.
The symmetries, the waist normalization, and the bound
\(|v_j|\leq\rho^{1/2}\) all pass to this limit, so \(v_\infty\)
satisfies \eqref{eq:app-growth}. After a further subsequence the
maximizing points themselves converge, and their normalization shows
that \(v_\infty\) is nonzero at the limiting point. This contradicts
Step~2 and rules out the first possibility.

\smallskip
\noindent\emph{Maximizing points whose radii tend to infinity.}
We now suppose that \(R_j\to\infty\). To keep the maximizing point
visible in the limit, we rescale both the horizontal coordinates and
the function values. Writing \(v_j^+(\xi)\) for \(v_j\) in horizontal
coordinates on the upper end, we set
\[
 \widetilde v_j(x)=R_j^{-1/2}v_j^+(R_jx),\qquad
 \Lambda_j=\frac{L_j}{R_j}.
\]
The rescaled functions are defined for
\(3/R_j<|x|<\Lambda_j\), with continuous extensions to the outer
boundary, and their normalization becomes
\[
 |\widetilde v_j(x)|\leq |x|^{1/2},\qquad
 |\widetilde v_j(\omega_j)|=1.
\]
In particular the maximizing points now lie on the unit sphere,
whereas the outer boundaries lie at radii
\(\Lambda_j\geq1\). Passing to a subsequence, we may assume
\(\Lambda_j\to\Lambda\in[1,\infty]\).

The limiting equation is Euclidean because, on every compact set
away from the origin, the rescaled catenoidal ends converge smoothly
to a plane and their rescaled Jacobi potentials tend to zero.
The forcing term also tends to zero: the coordinate rescaling
contributes \(R_j^2\) and the amplitude normalization contributes
\(R_j^{-1/2}\), so the rescaled right-hand side satisfies
\[
 \left|R_j^{3/2}f_j^+(R_jx)\right|
       \leq \norm{f_j}_{0,\alpha;-3/2}|x|^{-3/2}
       \longrightarrow0.
\]
Elliptic compactness therefore gives a harmonic limit \(v_\infty\)
on \(B_\Lambda\setminus\{0\}\) when \(\Lambda<\infty\), and on
\(\R^m\setminus\{0\}\) when \(\Lambda=\infty\).
It inherits the bound
\[
 |v_\infty(x)|\leq |x|^{1/2},
\]
which has two consequences: the singularity at the origin is removable,
and the value of the harmonic extension at the origin is zero.

When \(\Lambda<\infty\), the outer boundary remains part of the
limiting problem, so we must also keep track of its trace.
The normalized boundary constants satisfy
\[
 |R_j^{-1/2}b_j|\leq \Lambda_j^{1/2},
\]
and hence converge along a subsequence to a constant \(b_\infty\).
Boundary Schauder estimates, after identifying the outer spheres,
give convergence up to \(\partial B_\Lambda\) and show that
\(v_\infty=b_\infty\) there. The unique harmonic function on a ball
with constant boundary values is constant, and its value at the origin
therefore forces \(b_\infty=0\) and \(v_\infty=0\).

When \(\Lambda=\infty\), there is no outer boundary in the limiting
problem, but the growth bound gives the same conclusion.
For any fixed \(x_0\in\R^m\), the interior gradient estimate on
\(B_s(x_0)\) yields
\[
 |Dv_\infty(x_0)|
 \leq \frac{C}{s}\sup_{B_s(x_0)}|v_\infty|
 \leq C\frac{(|x_0|+s)^{1/2}}{s}\longrightarrow0
 \qquad\text{as }s\to\infty.
\]
Thus \(v_\infty\) is constant on \(\R^m\), and its value at zero
again implies that it vanishes.

In either case this conclusion contradicts the normalization at the
maximizing points. After choosing a convergent subsequence of
\(\omega_j\), convergence near the unit sphere gives a limiting value
of absolute size one. If \(\Lambda=1\), that sphere is the outer
boundary of the limiting domain, and it is precisely the boundary
convergence established above that preserves the nonzero value.
We have therefore excluded every possible location of the maximizing
points, proving \eqref{eq:app-supremum}.

\smallskip
\noindent\emph{4. The estimate for the original unknowns.}
Combining the weighted supremum bound with \eqref{eq:schauder} gives
the \(C^{2,\alpha}\) estimate in \eqref{eq:app-estimate}.
The boundary constant is controlled at the same time, since
\(L^{-1/2}|b|\leq S_L(v)\). Returning to
\[
 z=\beta^{-1}v,\qquad \lambda=\beta(T_L)^{-1}b,
\]
we obtain \eqref{eq:inverse} from the uniform multiplication bounds
noted at the start of the proof. Together with the existence and
uniqueness on each cylinder established in Step~1, this proves
Lemma~\ref{l:linear}.
\end{proof}

\bibliographystyle{plain}
\bibliography{references-3.bib}

\end{document}